\newtheorem{theorem}{Theorem}
\newtheorem{lemma}{Lemma}
\theoremstyle{remark}
\newtheorem{remark}{Remark}
\theoremstyle{definition}
\newtheorem{definition}{Definition}
\journal{Linear Algebra and its Applications}
\begin{document}

\begin{frontmatter}

\title{Algebraic Linearizations of Matrix Polynomials}

\author{Eunice Y.~S.~Chan\fnref{western}}
\ead{echan295@uwo.ca}

\author{Robert M.~Corless\fnref{western}}
\ead{rcorless@uwo.ca}
\fntext[western]{Rotman Institute of Philosophy, Ontario Research Centre for Computer Algebra, School of Mathematical and Statistical Sciences, Department of Applied Mathematics, Western University}

\author{Laureano Gonzalez-Vega\fnref{cantabria}}
\fntext[cantabria]{Departamento de Matematicas, Estadistica y Computacion, Universidad de Cantabria}
\ead{laureano.gonzalez@unican.es}

\author{J.~Rafael Sendra\fnref{alcala}}
\fntext[alcala]{Research Group ASYNACS, Departamento de F\'{\i}sica y Matem\'aticas, Universidad de Alcal\'{a}}
\ead{rafael.sendra@uah.es}

\author{Juana Sendra\fnref{madrid}}
\fntext[madrid]{Matem\'atica Aplicada a las TIC, Universidad Polit{\'e}cnica de Madrid}
\ead{jsendra@etsist.upm.es}


\begin{abstract}
We show how to construct linearizations of matrix polynomials $z\mathbf{a}(z)\mathbf{d}_0 + \mathbf{c}_0$, $\mathbf{a}(z)\mathbf{b}(z)$, $\mathbf{a}(z) + \mathbf{b}(z)$ (when $\mathrm{deg}\left(\mathbf{b}(z)\right) < \mathrm{deg}\left(\mathbf{a}(z)\right)$), and $z\mathbf{a}(z)\mathbf{d}_0\mathbf{b}(z) + \mathbf{c_0}$ from linearizations of the component parts, $\mathbf{a}(z)$ and $\mathbf{b}(z)$. This allows the extension to matrix polynomials of a new companion matrix construction.
\end{abstract}

\begin{keyword}
companion matrices, linearization, matrix polynomials,  block upper Hessenberg
\MSC[2010] 65F99, 15A22
\end{keyword}

\end{frontmatter}


\section{Introduction}
Many applications require the computation or approximation of \textsl{polynomial eigenvalues}, that is, those $z \in \mathbb{C}$ for which the \textsl{matrix polynomial} $\mathbf{P}(z)$ (of degree at most $s$) $\in \mathbb{C}[z]^{r \times r}$ is singular. In other words, we search for $z$ such that $\operatorname{det}\mathbf{P}(z) = 0$. If $s = 1$, that is $\mathbf{P}(z) = z\mathbf{B} - \mathbf{A}$, where $\mathbf{A}$, $\mathbf{B} \in \mathbb{C}^{N \times N}$, where $N = r$, is degree 1 in $z$, \textsl{i.e.}~\textsl{linear}, then this is ``just" the generalized eigenvalue problem, which can be reliably solved numerically on many platforms using software developed over many decades by the efforts of many people. We do not here survey the state of the art of solving the generalized eigenvalue problem, \textsl{i.e.}~determining $z$ such that $\operatorname{det}\left(z\mathbf{B} - \mathbf{A}\right) = 0$ (provided the \textit{pencil} $(\mathbf{A}, \mathbf{B})$ is \textsl{regular}, \textsl{i.e.}~that $\operatorname{det}\left(z\mathbf{B} - \mathbf{A}\right) \not\equiv 0$). We do note that the so-called QZ iteration, which uses unitary transformations to simultaneously upper-triangularize $\mathbf{A}$ and $\mathbf{B}$ so that 
\begin{align}
	\operatorname{det}(z\mathbf{B} - \mathbf{A}) &= \operatorname{det}\mathbf{Q}\operatorname{det}\left(z\mathbf{B} - \mathbf{A}\right) \operatorname{det}\mathbf{Z} \\
	&= \operatorname{det}\left(z\mathbf{Q}\mathbf{B}\mathbf{Z} - \mathbf{Q}\mathbf{A}\mathbf{Z}\right) \\
	&= \operatorname{det}\left(z\mathbf{T_B} - \mathbf{T_A}\right)
\end{align}
allows its eigenvalues to be read off from the corresponding diagonal entries of $\mathbf{T_B}$ and $\mathbf{T_A}$, is by now very well-developed and reliable. Research continues into making the method even faster and more reliable especially as novel architectures are invented and especially for matrix structures that arise frequently in practice. But in this paper we simply take such methods as given: we regard a linear matrix polynomial as one that is effectively solved. Thus, our task becomes one of reducing a more general matrix polynomial eigenproblem to a ``mere" linear one. In this case, the dimension of the linear problem, $N$, is larger: $N \geq r \cdot s$ (remember the degree of $P(z)$ is at most $s$, and its dimension is $r$). This process is known as ``linearization", naturally enough, although we note that the resulting problem, even if it is called ``linear", is more properly considered as being of degree 2, once the unknown eigenvectors are considered: $z\mathbf{B}v = \mathbf{A}v$ is linear in the entries of $v$, and of $z$ by itself, but terms like $zv_1$, $zv_2$, etc appear, which are really of degree two, in the language of computational algebra. Indeed reduction of \textsl{any} system of polynomial equations (if there are only a finite number of solutions, a situation called ``being zero-dimensional'' in the literature) can always be ``reduced'' to a degree 2 system; this is known as the effective Nullstellensatz. Reduction to a generalized eigenproblem is a (very) practical concrete exhibition of this theorem.

Of course there are many practical details, that really matter. ``In theory, there's no difference between theory and practice; but in practice, there is." One huge item of practical importance is the commonly-undertaken reduction to upper Hessenberg form, prior to beginning the QZ iteration; this can be stably done in $\mathcal{O}(N^2)$ operations and greatly speeds up the iterations subsequently.

Other possibilities exist than linearization. Indeed there is much current research into what is called ``$\ell$-ification," \textsl{i.e.}~reduction of a matrix polynomial of degree $m\ell$ to a (larger) matrix polynomial of degree at most $\ell$ (having degree at most $\ell$ is also called ``having \textsl{grade} $\ell$")~\cite{dopico2018block}. But here we restrict ourselves to the case $\ell = 1$.

Surprisingly, there are still things to be said about this, in spite of many decades of work by many people. Of course, the proofs in this paper rely heavily on that work, especially that summarized in the classic \cite{gohberg2009matrix}. But still we will see some new elements, at least for a particular class of problems.

A useful introduction to the general area can be found in \cite[pages 263--281]{dennis2015princeton} and the references therein. Early history is discussed in \cite{rahman2002analytic}. Major recent works include \cite{mackey2006vector} and \cite{mackey2006structured}.

\section{The Basic Idea} \label{sec:3}

The basic idea of the algebraic linearizations described here was first discovered in the context of what are called ``Mandelbrot polynomials"~\cite{Chan2016, corless2013largest}. Mandelbrot polynomials are defined by $p_{0} = 0$ and $p_{n+1} = zp_{n}^{2} + 1$. Piers Lawrence found matrices $\mathbf{M}_{n}$, populated only by elements $0$ or $-1$, with $p_{n}(z) = \mathrm{det}(z\mathbf{I}- \mathbf{M}_{n})$. Naturally enough, these were called Mandelbrot matrices. We outline their construction below.

The first few $p_n$ are $p_0 = 0$, $p_1 = 1$, $p_2 = z+1$, and $p_3 = z^3 + 2z^2 + z + 1$. The idea is clearest going from $\mathbf{M}_3$ to $\mathbf{M}_4$; we will build up to that. Because the only root of $p_2$ is $z = -1$, clearly $\mathbf{M}_2 = \left[-1\right]$, a $1 \times 1$ matrix with eigenvalue $-1$. To make $\mathbf{M}_3$, glue two copies of $\mathbf{M}_2$ together to make
\begin{equation}
	\mathbf{M}_3 =
	\left[
		\begin{array}{ccc}
			\cdashline{1-1} \cdashline{3-3}
			\multicolumn{1}{:c:}{-1} & \phantom{-}0 & \multicolumn{1}{:c:}{\color{red}{-1}} \\
			\cdashline{1-3}
			\multicolumn{1}{:c}{\color{red}{-1}} & \multicolumn{1}{c:}{\phantom{-}0} & \phantom{-}0 \\
			\cdashline{3-3}
			\multicolumn{1}{:c}{\phantom{-}0} & \multicolumn{1}{c:}{\color{red}{-1}} & \multicolumn{1}{:c:}{-1}\\
			\cdashline{1-3}
		\end{array}
	\right]
\end{equation}
which one can directly verify has
\begin{align}
	\operatorname{det}\left(z\mathbf{I} - \mathbf{M}_3\right) &= \operatorname{det}
	\left(
		\begin{array}{ccc}
			z + 1 & 0 & 1 \\
			1 & z & 0 \\
			& 1 & z+1
		\end{array}
	\right) \\
	&= (z + 1) \operatorname{det}
	\left(
		\begin{array}{cc}
			z  & 0 \\
			1 & z + 1
		\end{array}
	\right)
	+ 1 \cdot \operatorname{det}
	\left(
		\begin{array}{cc}
			1 & z \\
			& 1
		\end{array}
	\right) \\
	&= z(z+1)^2 + 1 \>, \ \text{as desired.}
\end{align}
To make $\mathbf{M}_4$ we glue two copies of $\mathbf{M}_3$ together:
\begin{equation}
	\left[
		\begin{array}{ccccccc}
			\cdashline{1-3} \cdashline{7-7}
			\multicolumn{1}{:c}{-1} & \phantom{-}0 & \multicolumn{1}{c:}{-1} & & & & \multicolumn{1}{:c:}{\color{red}{-1}} \\
			\cdashline{7-7}
			\multicolumn{1}{:c}{-1} & \phantom{-}0 & \multicolumn{1}{c:}{\phantom{-}0} & & & & \\
			\multicolumn{1}{:c}{ }& -1 & \multicolumn{1}{c:}{-1} & & & & \\
			\cdashline{1-4}
			& & \multicolumn{1}{:c}{\color{red}{-1}} & \multicolumn{1}{c:}{\phantom{-}0} & & & \\
			\cdashline{5-7}
			& & \multicolumn{1}{:c}{ } & \multicolumn{1}{c:}{\color{red}{-1}} & -1 & \phantom{-}0 & \multicolumn{1}{c:}{-1} \\
			\cdashline{3-4}
			& & & & \multicolumn{1}{:c}{-1} & \phantom{-}0 & \multicolumn{1}{c:}{\phantom{-}0} \\
			& & & & \multicolumn{1}{:c}{ } & -1 & \multicolumn{1}{c:}{-1}\\
			\cdashline{5-7}
		\end{array}
	\right]
\end{equation}
and at this level the ``glue" and the ``copies" are more distingushable. The upper Hessenberg nature of the matrix is also visible. To prove $p_4 = \operatorname{det}\left(z\mathbf{I} - \mathbf{M}_4\right)$ we use Knuth's idea: the determinant is linear in the first row:
\begin{align}
	&\operatorname{det}\left(z\mathbf{I} - \mathbf{M}_4\right) \\
	&= \operatorname{det}\left(
		\begin{array}{c:c:c}
			z\mathbf{I} - \mathbf{M}_{3} &
			\begin{array}{c}
				\ \\
				\
			\end{array}
			& \\
			\hdashline
			\begin{array}{cr}
				\quad & 1
			\end{array}
			& z & \\
			\hdashline
			& 
			\begin{array}{c}
				1 \\
				\
			\end{array}
			&
			z\mathbf{I} - \mathbf{M}_{3}
		\end{array}
	\right) +
	\operatorname{det} \left(
	\begin{array}{ccccc}
		0 & 0 & 0 & 0 & 1 \\
		1 & \\
		& 1 & & & \\
		& & \ddots & & \makebox(2,0){\rule[5.5ex]{1pt}{3.5\normalbaselineskip}} \\
		& & & 1
	\end{array}
	\right) \\
	&= zp_3^2 + 1 \cdot \operatorname{det}\left(
	\begin{array}{cccc}
		1 & & \makebox(0,2){\rule[1ex]{37pt}{1pt}}& \\
		& 1 & & \\
		& & \ddots & \makebox(2, 0){\rule[9ex]{1pt}{45pt}}\\
		& & & 1
	\end{array}
	\right) \\
	&= zp_3^2 + 1 \>, \ \text{as desired.}
\end{align}
This gives the idea. The generalization will be Theorem \ref{thm:4} in the next section.

For more on Mandelbrot matrices, see \cite{corless2013largest}, \cite{Chan2016}, and \cite{Chan2017}. They and their generalizations have some interesting properties. For now, note that \cite{Chan2017} generalized the construction to finding a companion for the scalar polynomial $c = z\mathbf{ab} + \mathbf{c}_0$ given upper Hessenberg companions for $\mathbf{a}$ and $\mathbf{b}$. It is that generalization that we turn into a linearization in the next section.

\section{The Main Theorems}
Theorem \ref{thm:1} shows how to linearize 
\begin{equation}
	\mathbf{e}_{1}(z) = z\mathbf{d}_{0}\mathbf{a}(z) + \mathbf{c}_{0}
	\label{eqn:e1}
\end{equation}
and 
\begin{equation}
	\mathbf{e}_{2}(z) = z\mathbf{a}(z)\mathbf{d}_{0} + \mathbf{c}_{0} \>,
	\label{eqn:e2}
\end{equation}
where $\mathbf{e}_{1}(z), \mathbf{e}_{2}(z) \in \mathbb{C}^{r \times r}$, once linearization for $\mathbf{a}(z)$ is available.
\begin{equation}
	\Lambda\left(\mathbf{a}(z)\right) := \left\{z \mid \mathrm{det}\left(\mathbf{a}(z)\right) = 0\right\}
\end{equation}
is the spectrum of the matrix polynomial $\mathbf{a}(z) \in \mathbb{C}^{r \times r}$. These $z$ are the polynomial eigenvalues of $\mathbf{a}(z)$.
\begin{theorem} \label{thm:1}
Consider $\mathbf{e}_{1}(z)$ and $\mathbf{e}_{2}$ as in equations \eqref{eqn:e1} and \eqref{eqn:e2}, respectively. Suppose $\mathbf{a}(z) \in \mathbb{C}[z]^{r\times r}$ is of degree $s \geq 1$, $\mathbf{c}_{0}$ and $\mathbf{d}_0 \in \mathbb{C}^{r \times r}$,  and that $\mathbf{a}(z)$ has the regular linearization pencil $(\mathbf{D_A}, \mathbf{A})$ with $\operatorname{det} \mathbf{a}(z) = \operatorname{det} \left(z\mathbf{D_A} - \mathbf{A}\right)$ and $z\mathbf{D_A} - \mathbf{A}$ invertible except when $z \in \Lambda(\mathbf{a})$ which is a discrete set. Moreover suppose that we have the resolvent form
\begin{equation}
	\mathbf{a}^{-1}(z) = \mathbf{X}_\mathbf{A}\left(z\mathbf{D_A} - \mathbf{A}\right)^{-1}\mathbf{Y}_\mathbf{A} \qquad z \in \mathbb{C}\notin \Lambda(\mathbf{a})
\end{equation}
and $\mathbf{X}_\mathbf{A} \in \mathbb{C}^{r\times rs}$ and $\mathbf{Y}_{\mathbf{A}} \in \mathbb{C}^{rs \times r}$ are known. Then if
\begin{equation}
	\mathbf{E}_1 = \left[
	\begin{array}{cc}
		\mathbf{0} & \mathbf{c}_0\mathbf{X}_\mathbf{A} \\
		-\mathbf{Y}_\mathbf{A} & \mathbf{A}
	\end{array}
	\right] \>, \quad
	\mathbf{D}_{\mathbf{E}_1} = \left[
	\begin{array}{cc}
		\mathbf{d}_0 & \\
		& \mathbf{D_A}
	\end{array}
	\right]
\end{equation}
and
\begin{equation}
	\mathbf{E}_2 = \left[
	\begin{array}{cc}
		\mathbf{A} & \mathbf{Y}_\mathbf{A}\mathbf{c}_0 \\
		-\mathbf{X}_\mathbf{A} & \mathbf{0}
	\end{array}
	\right] \>, \quad \mathbf{D}_{\mathbf{E}_2} = \left[
	\begin{array}{cc}
		\mathbf{D_A} & \\
		& \mathbf{d}_0
	\end{array}
	\right]
\end{equation}
then $\operatorname{det}\left(z\mathbf{D}_{\mathbf{E}_1} - \mathbf{E}_1\right) = \operatorname{det} \mathbf{e}_1(z)$ where $\mathbf{e}_1(z) = z\mathbf{d}_0 \mathbf{a}(z) + \mathbf{c}_0$ and \\ $\operatorname{det}\left(z\mathbf{D}_{\mathbf{E}_2} - \mathbf{E}_2\right) = \operatorname{det} \mathbf{e}_2(z)$ where $\mathbf{e}_2(z) = z\mathbf{a}(z)\mathbf{d}_0 + \mathbf{c}_0$. Moreoever
\begin{equation}
	\left[
		\begin{array}{cc}
			\mathbf{0} & -\mathbf{X}_\mathbf{A}
		\end{array}
	\right]
	\left(z\mathbf{D}_{\mathbf{E}_1} - \mathbf{E}_1\right)^{-1}
	\left[
		\begin{array}{c}
			\mathbf{I} \\
			\mathbf{0}
		\end{array}
	\right] = \mathbf{e}_1^{-1}(z)
\end{equation}
and
\begin{equation}
	\left[
		\begin{array}{cc}
			\mathbf{0} & \mathbf{I}
		\end{array}
	\right]
	\left(z\mathbf{D}_{\mathbf{E}_2} - \mathbf{E}_2\right)^{-1}
	\left[
		\begin{array}{c}
			-\mathbf{Y}_\mathbf{A} \\
			\mathbf{0}
		\end{array}
	\right] = \mathbf{e}_2^{-1}(z)
\end{equation}
give resolvent forms for the larger systems.
\begin{proof}
We use the Schur factoring~\cite[Chapter 12]{hogben2006handbook}:
\begin{align}
	&z\mathbf{D}_{\mathbf{E}_1} - \mathbf{E}_1\\ 
	&=
	\left[
		\begin{array}{cc}
			z\mathbf{d}_0 & -\mathbf{c}_0\mathbf{X}_\mathbf{A} \\
			\mathbf{Y}_\mathbf{A} & z\mathbf{D_A} - \mathbf{A}
		\end{array}
	\right] \\
	&=
	\left[
		\begin{array}{cc}
			\mathbf{I} & -\mathbf{c}_0\mathbf{X}_\mathbf{A}\left(z\mathbf{D_A} - \mathbf{A}\right)^{-1} \\
			\mathbf{0} & \mathbf{I}
		\end{array}
	\right]
	\left[
		\begin{array}{cc}
			 \mathbf{S_A} & \mathbf{0} \\
			\mathbf{Y}_\mathbf{A} & z\mathbf{D_A} - \mathbf{A}
		\end{array}
	\right] \\
	&= \left[
		\begin{array}{cc}
			\mathbf{I} & -\mathbf{c}_0\mathbf{X}_\mathbf{A}\left(z\mathbf{D_A} - \mathbf{A}\right)^{-1} \\
			\mathbf{0} & \mathbf{I}
		\end{array}
	\right]
	\left[
		\begin{array}{cc}
			z\mathbf{d}_0 + \mathbf{c}_0\mathbf{a}^{-1}(z) & \mathbf{0} \\
			\mathbf{Y}_\mathbf{A} & z\mathbf{D_A} - \mathbf{A}
		\end{array}
	\right] \\
	&= \mathbf{P}_1\mathbf{P}_2 \>,
\end{align}
where the Schur complement $\mathbf{S_A} = z\mathbf{d}_0 + \mathbf{c}_0\mathbf{X}_\mathbf{A}\left(z\mathbf{D_A} - \mathbf{A}\right)^{-1}\mathbf{Y}_\mathbf{A}$. Thus
\begin{align}
	\operatorname{det}(z\mathbf{D}_{\mathbf{E}_1} - \mathbf{E}_1) &= \mathrm{det}(\mathbf{P}_{1})\mathrm{det}(\mathbf{P}_{2}) \\
	&= \operatorname{det}\left(z\mathbf{d}_0 + \mathbf{c}_0\mathbf{a}^{-1}(z)\right)\operatorname{det}\left(z\mathbf{D_A} - \mathbf{A}\right)\\
	&=\operatorname{det}\left(z\mathbf{d}_0 + \mathbf{c}_0\mathbf{a}^{-1}(z)\right)\operatorname{det}\mathbf{a}(z) \\
	&= \operatorname{det}\left(z\mathbf{d}_0\mathbf{a}(z) + \mathbf{c}_0 \right)\\
	&= \operatorname{det}\left(\mathbf{e}_1(z)\right) \>,
\end{align}
as desired. Moreover, $\left(z\mathbf{D}_{\mathbf{E}_1} - \mathbf{E}_1\right)^{-1} = \mathbf{P}^{-1}_2\mathbf{P}^{-1}_1$. Let
\begin{align}
	\mathbf{Q}_{a} &= 
	\left[
		\begin{array}{cc}
			\mathbf{I} & \\
			& \left(z\mathbf{D_A} - \mathbf{A}\right)^{-1}
		\end{array}
	\right]
	\left[
		\begin{array}{cc}
			\mathbf{I} & \mathbf{0} \\
			-\mathbf{Y}_\mathbf{A} & \mathbf{I}
		\end{array}
	\right]
	\left[
		\begin{array}{cc}
			\left(\mathbf{e}_1\mathbf{a}^{-1}\right)^{-1} & \\
			& \mathbf{I}
		\end{array}
	\right] \\
	&=
	\left[
		\begin{array}{cc}
			\mathbf{a}(z)\mathbf{e}_1^{-1}(z) & \mathbf{0} \\
			-\left(z\mathbf{D_A} - \mathbf{A}\right)^{-1}\mathbf{Y}_\mathbf{A} \mathbf{a}(z)\mathbf{e}_1^{-1}(z) & \left(z\mathbf{D_A} - \mathbf{A}\right)^{-1}
		\end{array}
	\right]
\end{align}
and
\begin{equation}
	\mathbf{Q}_{b} = 
	\left[
		\begin{array}{cc}
			\mathbf{I} & \mathbf{c}_0\mathbf{X}_\mathbf{A}\left(z\mathbf{D_A} - \mathbf{A}\right)^{-1} \\
			\mathbf{0} & \mathbf{I}
		\end{array}
	\right] \>.
\end{equation}
Then,
\begin{align}
	\mathbf{P}^{-1}_2 &= \mathbf{Q}_{a}\mathbf{Q}_{b} \\
	&= 
	\left[
		\begin{array}{cc}
			\mathbf{a}(z)\mathbf{e}_1^{-1}(z) & \mathbf{Q}_{c} \\
			-\left(z\mathbf{D_A} - \mathbf{A}\right)^{-1}\mathbf{Y}_\mathbf{A} \mathbf{a}(z)\mathbf{e}_1^{-1}(z) & \mathbf{Q}_{d}
		\end{array}
	\right] \>,
\end{align}
where
\begin{equation}
	\mathbf{Q}_{c} = \mathbf{a}(z)\mathbf{e}_1^{-1}(z)\mathbf{c}_0\mathbf{X}_\mathbf{A}\left(z\mathbf{D_A} - \mathbf{A}\right)^{-1}
\end{equation}
and
\begin{equation}
	\mathbf{Q}_{d} = \left(z\mathbf{D_A} - \mathbf{A}\right)^{-1} - \left(z\mathbf{D_A} - \mathbf{A}\right)^{-1}\mathbf{Y}_\mathbf{A} \mathbf{a}\mathbf{e}_1^{-1}\mathbf{c}_0\mathbf{X}_\mathbf{A}\left(z\mathbf{D_A} - \mathbf{A}\right)^{-1} \>,
\end{equation}
so
\begin{align}
	\left[
		\begin{array}{cc}
			\mathbf{0} & - \mathbf{X}_\mathbf{A}
		\end{array}
	\right]
	\left(z\mathbf{D}_{\mathbf{E}_1} - \mathbf{E}_1\right)^{-1}
	\left[
		\begin{array}{c}
			\mathbf{I} \\
			\mathbf{0}
		\end{array}
	\right]
	&= \mathbf{X}_\mathbf{A}\left(z\mathbf{D_A}-\mathbf{A}\right)^{-1} \mathbf{Y}_\mathbf{A} \mathbf{a}(z)\mathbf{e}_1^{-1}(z) \\
	&= \mathbf{a}^{-1}(z)\mathbf{a}(z)\mathbf{e}^{-1}_1(z)\\
	&= \mathbf{e}_1^{-1}(z)
\end{align}
as claimed.

Similarly,
\begin{align}
	z\mathbf{D}_{\mathbf{E}_2} - \mathbf{E}_2 &= 
	\left[
		\begin{array}{cc}
			z\mathbf{D_A} - \mathbf{A} & -\mathbf{Y}_\mathbf{A}\mathbf{c}_0 \\
			\mathbf{X}_\mathbf{A} & z\mathbf{d}_0
		\end{array}
	\right]\\ 
	&=\mathbf{Q}_{e}\mathbf{Q}_{f} \>,
\end{align}
where
\begin{equation}
	\mathbf{Q}_{e} = \left[
		\begin{array}{cc}
			z\mathbf{D_A} - \mathbf{A} & \mathbf{0} \\
			\mathbf{X}_\mathbf{A} & z\mathbf{d}_0 + \mathbf{X}_{\mathbf{A}}\left(z\mathbf{D_A} - \mathbf{A}\right)^{-1}\mathbf{Y}_\mathbf{A}\mathbf{c}_0
		\end{array}
	\right] 
\end{equation}
and
\begin{equation}
	\mathbf{Q}_{f} = 
	\left[
		\begin{array}{cc}
			\mathbf{I} & -\left(z\mathbf{D_A} - \mathbf{A}\right)^{-1}\mathbf{Y}_\mathbf{A}\mathbf{c}_0 \\
			\mathbf{0} & \mathbf{I}
		\end{array}
	\right] \>,
\end{equation}
so
\begin{align}
	\operatorname{det}\left(z\mathbf{D}_{\mathbf{E}_2} - \mathbf{E}_2\right) & = \operatorname{det}\left(z\mathbf{D_A} - \mathbf{A}\right)\operatorname{det}\left(z\mathbf{d}_0 + \mathbf{a}^{-1}(z) \mathbf{c}_0\right) \\
	&= \operatorname{det} \mathbf{a} \operatorname{det}\left(z\mathbf{d}_0 + \mathbf{a}^{-1}\mathbf{c}_0\right) \\
	&= \operatorname{det}\left(z \mathbf{a}(z) \mathbf{d}_0 + \mathbf{c}_0\right) \\
	&= \operatorname{det} \mathbf{e}_2(z)
\end{align}
as claimed.

Moreover
\begin{equation}
	\left(z\mathbf{D}_{\mathbf{E}_2} - \mathbf{E}_2\right)^{-1} = \mathbf{Q}_{g} \mathbf{Q}_{h} \>,
\end{equation}
where
\begin{align}
	\mathbf{Q}_{g} &= 
	\left[
		\begin{array}{cc}
			\mathbf{I} & \left(z\mathbf{D_A} - \mathbf{A}\right)^{-1}\mathbf{Y}_\mathbf{A}\mathbf{c}_0 \\
			\mathbf{0} & \mathbf{I}
		\end{array}
	\right]
	\left[
		\begin{array}{cc}
			\mathbf{I} & \\
			& \mathbf{e}_2^{-1}(z)\mathbf{a}(z)
		\end{array}
	\right] \\
	&= \left[
		\begin{array}{cc}
			\mathbf{I} & \left(z\mathbf{D_A} - \mathbf{A} \right)^{-1}\mathbf{Y}_\mathbf{A} \mathbf{c}_0\mathbf{e}_2^{-1}\mathbf{a} \\
			 \mathbf{0} & \mathbf{e}_2^{-1}\mathbf{a}
		\end{array}
	\right]
\end{align}
and
\begin{align}
	\mathbf{Q}_{h} &=
	\left[
		\begin{array}{cc}
			\mathbf{I} & \mathbf{0} \\
			-\mathbf{X}_\mathbf{A} & \mathbf{I}
		\end{array}
	\right]
	\left[
		\begin{array}{cc}
			\left(z\mathbf{D_A} - \mathbf{A}\right)^{-1} & \\
			& \mathbf{I}
		\end{array}
	\right] \\
	&= 
	\left[
		\begin{array}{cc}
			\left(z\mathbf{D_A} - \mathbf{A}\right)^{-1} & \mathbf{0} \\
			-\mathbf{X}_\mathbf{A}\left(z\mathbf{D_A} - \mathbf{A}\right)^{-1} & \mathbf{I}
		\end{array}
	\right] \>,
\end{align}
which results in
\begin{equation}
	\left[
		\begin{array}{cc}
			\mathbf{Q}_{i} & \left(z\mathbf{D_A} - \mathbf{A}\right)^{-1}\mathbf{Y}_\mathbf{A}\mathbf{c}_0\mathbf{e}_2^{-1}\mathbf{a} \\
			-\mathbf{e}_2^{-1}\mathbf{a}\mathbf{X}_\mathbf{A}\left(z\mathbf{D_A} - \mathbf{A}\right)^{-1} & \mathbf{e}_2^{-1}\mathbf{a}
		\end{array}
	\right] \>,
\end{equation}
where
\begin{equation}
	\mathbf{Q}_{i} =
	\left(z\mathbf{D_A} - \mathbf{A}\right)^{-1} - \left(z\mathbf{D_A} - \mathbf{A}\right)^{-1}\mathbf{Y}_\mathbf{A}\mathbf{c}_0\mathbf{e}_2^{-1}\mathbf{a}\mathbf{X}_\mathbf{A}\left(z\mathbf{D_A} - \mathbf{A}\right)^{-1} \>.
\end{equation}
Therefore,
\begin{align}
	\left[
		\begin{array}{cc}
			\mathbf{0} & \mathbf{I}
		\end{array}
	\right]
	\left(z\mathbf{D}_2 - \mathbf{E}_2\right)^{-1}
	\left[
		\begin{array}{c}
			-\mathbf{Y}_\mathbf{A} \\
			\mathbf{0}
		\end{array}
	\right]
	&= \mathbf{e}_2^{-1}\mathbf{a}\mathbf{X}_\mathbf{A}\left(z\mathbf{D_A} - \mathbf{A}\right)^{-1}\mathbf{Y}_\mathbf{A} \\
	&= \mathbf{e}_2^{-1}(z)
\end{align}
as claimed.
\end{proof}
\end{theorem}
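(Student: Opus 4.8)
The plan is to treat $\mathbf{e}_1$ and $\mathbf{e}_2$ by one common device: present the proposed pencil as a $2\times2$ block matrix, factor it by a block Schur (block-$LU$) decomposition into a unit block-triangular factor $\mathbf{P}_1$, so that $\det\mathbf{P}_1=1$, times a block-triangular factor $\mathbf{P}_2$ whose diagonal blocks are a Schur complement $\mathbf{S}$ and $z\mathbf{D_A}-\mathbf{A}$, and then use the hypothesised resolvent form $\mathbf{a}^{-1}(z)=\mathbf{X}_\mathbf{A}(z\mathbf{D_A}-\mathbf{A})^{-1}\mathbf{Y}_\mathbf{A}$ to collapse $\mathbf{S}$ into a small $r\times r$ expression built from $\mathbf{a}^{-1}(z)$.

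For $\mathbf{e}_1$ the pencil is
\[
  z\mathbf{D}_{\mathbf{E}_1}-\mathbf{E}_1=\left[\begin{array}{cc} z\mathbf{d}_0 & -\mathbf{c}_0\mathbf{X}_\mathbf{A}\\ \mathbf{Y}_\mathbf{A} & z\mathbf{D_A}-\mathbf{A}\end{array}\right];
\]
eliminating the $(1,2)$ block on the left gives $\mathbf{S}=z\mathbf{d}_0+\mathbf{c}_0\mathbf{X}_\mathbf{A}(z\mathbf{D_A}-\mathbf{A})^{-1}\mathbf{Y}_\mathbf{A}=z\mathbf{d}_0+\mathbf{c}_0\mathbf{a}^{-1}(z)$. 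Taking determinants and using $\det(z\mathbf{D_A}-\mathbf{A})=\det\mathbf{a}(z)$ together with multiplicativity, $\det(z\mathbf{D}_{\mathbf{E}_1}-\mathbf{E}_1)=\det\!\bigl(z\mathbf{d}_0+\mathbf{c}_0\mathbf{a}^{-1}(z)\bigr)\det\mathbf{a}(z)=\det\!\bigl((z\mathbf{d}_0+\mathbf{c}_0\mathbf{a}^{-1}(z))\mathbf{a}(z)\bigr)=\det\!\bigl(z\mathbf{d}_0\mathbf{a}(z)+\mathbf{c}_0\bigr)=\det\mathbf{e}_1(z)$ for all $z\notin\Lambda(\mathbf{a})$, and hence for all $z$ since both sides are polynomials agreeing off a discrete set. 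The $\mathbf{e}_2$ case is the mirror image: laying out the pencil as $\left[\begin{array}{cc} z\mathbf{D_A}-\mathbf{A} & -\mathbf{Y}_\mathbf{A}\mathbf{c}_0\\ \mathbf{X}_\mathbf{A} & z\mathbf{d}_0\end{array}\right]$ and eliminating on the left produces Schur complement $z\mathbf{d}_0+\mathbf{X}_\mathbf{A}(z\mathbf{D_A}-\mathbf{A})^{-1}\mathbf{Y}_\mathbf{A}\mathbf{c}_0=z\mathbf{d}_0+\mathbf{a}^{-1}(z)\mathbf{c}_0$, so $\det(z\mathbf{D}_{\mathbf{E}_2}-\mathbf{E}_2)=\det\mathbf{a}(z)\det\!\bigl(z\mathbf{d}_0+\mathbf{a}^{-1}(z)\mathbf{c}_0\bigr)=\det\!\bigl(z\mathbf{a}(z)\mathbf{d}_0+\mathbf{c}_0\bigr)=\det\mathbf{e}_2(z)$.

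For the resolvent identities I would invert the factorisation, $(z\mathbf{D}_{\mathbf{E}_1}-\mathbf{E}_1)^{-1}=\mathbf{P}_2^{-1}\mathbf{P}_1^{-1}$, and compute $\mathbf{P}_2^{-1}$ by expressing it as an explicit product of elementary block factors (block-diagonal, unit block-lower-triangular, block-diagonal), so that its $(1,1)$ and $(2,1)$ blocks come out as $\mathbf{a}(z)\mathbf{e}_1^{-1}(z)$ and $-(z\mathbf{D_A}-\mathbf{A})^{-1}\mathbf{Y}_\mathbf{A}\mathbf{a}(z)\mathbf{e}_1^{-1}(z)$ respectively. Since the first block-column of $\mathbf{P}_1^{-1}$ is $[\,\mathbf{I};\,\mathbf{0}\,]$, right-multiplying by $[\,\mathbf{I};\,\mathbf{0}\,]$ just picks off the first block-column of $\mathbf{P}_2^{-1}$, and left-multiplying by $[\,\mathbf{0}\ \ -\mathbf{X}_\mathbf{A}\,]$ then isolates the $(2,1)$ block, yielding $\mathbf{X}_\mathbf{A}(z\mathbf{D_A}-\mathbf{A})^{-1}\mathbf{Y}_\mathbf{A}\,\mathbf{a}(z)\mathbf{e}_1^{-1}(z)=\mathbf{a}^{-1}(z)\mathbf{a}(z)\mathbf{e}_1^{-1}(z)=\mathbf{e}_1^{-1}(z)$. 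The $\mathbf{e}_2$ resolvent is handled identically with selection vectors $[\,\mathbf{0}\ \ \mathbf{I}\,]$ and $[\,-\mathbf{Y}_\mathbf{A};\,\mathbf{0}\,]$, the surviving factors telescoping as $\mathbf{e}_2^{-1}(z)\mathbf{a}(z)\,\mathbf{X}_\mathbf{A}(z\mathbf{D_A}-\mathbf{A})^{-1}\mathbf{Y}_\mathbf{A}=\mathbf{e}_2^{-1}(z)\mathbf{a}(z)\mathbf{a}^{-1}(z)=\mathbf{e}_2^{-1}(z)$.

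The determinant half is immediate once $\mathbf{S}$ has been identified; the delicate half is the resolvent computation, where the real hazard is a mis-ordered noncommuting block product or a sign slip in tracking exactly which block of the inverse is extracted by the pre- and post-multiplications. I would guard against this by never inverting the $2\times2$ block directly, but instead decomposing each triangular factor into elementary pieces whose inverses are transparent, so that the cancellation down to $\mathbf{a}^{-1}(z)\mathbf{a}(z)=\mathbf{I}$ is visible term by term.
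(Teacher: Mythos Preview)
Your proposal is correct and follows essentially the same route as the paper: a block Schur (block-$LU$) factorisation of each pencil, identification of the Schur complement via the resolvent hypothesis $\mathbf{a}^{-1}=\mathbf{X}_\mathbf{A}(z\mathbf{D_A}-\mathbf{A})^{-1}\mathbf{Y}_\mathbf{A}$, and then inversion of the factors by decomposing the block-triangular piece into elementary block-diagonal and unit block-triangular factors so that the selected block telescopes to $\mathbf{e}_i^{-1}$. Your explicit remark that the determinant identity extends from $z\notin\Lambda(\mathbf{a})$ to all $z$ by polynomial continuation is a point the paper leaves implicit, and your observation that right-multiplying by $[\mathbf{I};\mathbf{0}]$ simply picks off the first block-column of $\mathbf{P}_2^{-1}$ (since $\mathbf{P}_1^{-1}$ has first block-column $[\mathbf{I};\mathbf{0}]$) is a clean shortcut over the paper's full product, but the argument is otherwise identical.
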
 

Theorem \ref{thm:2} shows how to linearize a product $\mathbf{a}(z)\mathbf{b}(z)$ given linearizations of each of $\mathbf{a}(z)$ and $\mathbf{b}(z)$.
\begin{theorem}\label{thm:2}
Suppose $\mathbf{a}(z)$, $\mathbf{D_A}$, $\mathbf{A}$, $\mathbf{X}_\mathbf{A}$, and $\mathbf{Y}_\mathbf{A}$ are as in Theorem \ref{thm:1}, and suppose similarly that $\mathbf{b}(z) \in \mathbb{C}[z]^{r \times r}$ is of degree $t \geq 1$, has the regular linearization pencil $\left(\mathbf{D_B}, \mathbf{B}\right)$ with $\operatorname{det}\mathbf{b}(z) = \operatorname{det}\left(z\mathbf{D_B} - \mathbf{B}\right)$ and resolvent
\begin{equation}
	\mathbf{b}^{-1}(z) = \mathbf{X}_\mathbf{B}\left(z\mathbf{D_B} - \mathbf{B}\right)^{-1}\mathbf{Y}_\mathbf{B} \qquad \text{for } z \in \mathbb{C} \notin \Lambda(\mathbf{b})
\end{equation}
\end{theorem}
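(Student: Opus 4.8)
The plan is to imitate the Schur-factoring argument of Theorem~\ref{thm:1}, the only real change being that the coupling between the two given pencils is now one-sided. Since $\bigl(\mathbf{a}(z)\mathbf{b}(z)\bigr)^{-1} = \mathbf{b}^{-1}(z)\mathbf{a}^{-1}(z)$, I would put the $\mathbf{b}$-block in the leading position and take
\begin{equation}
  \mathbf{C} = \left[\begin{array}{cc} \mathbf{B} & -\mathbf{Y}_\mathbf{B}\mathbf{X}_\mathbf{A} \\ \mathbf{0} & \mathbf{A} \end{array}\right], \qquad
  \mathbf{D_C} = \left[\begin{array}{cc} \mathbf{D_B} & \\ & \mathbf{D_A} \end{array}\right],
\end{equation}
so that $z\mathbf{D_C} - \mathbf{C}$ is block upper triangular (hence block upper Hessenberg whenever $\mathbf{A}$ and $\mathbf{B}$ are), of size $r(s+t)\times r(s+t)$, with constant off-diagonal block $\mathbf{Y}_\mathbf{B}\mathbf{X}_\mathbf{A}\in\mathbb{C}^{rt\times rs}$; in particular $(\mathbf{D_C},\mathbf{C})$ really is a linear pencil. (The choice is not unique: a block lower-triangular arrangement with $\mathbf{a}$ in the leading position works equally well.)

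First I would read off the determinant. By the block-triangular determinant formula together with $\operatorname{det}(z\mathbf{D_A}-\mathbf{A})=\operatorname{det}\mathbf{a}(z)$ and $\operatorname{det}(z\mathbf{D_B}-\mathbf{B})=\operatorname{det}\mathbf{b}(z)$,
\begin{equation}
  \operatorname{det}(z\mathbf{D_C}-\mathbf{C}) = \operatorname{det}(z\mathbf{D_B}-\mathbf{B})\operatorname{det}(z\mathbf{D_A}-\mathbf{A}) = \operatorname{det}\mathbf{b}(z)\operatorname{det}\mathbf{a}(z) = \operatorname{det}\bigl(\mathbf{a}(z)\mathbf{b}(z)\bigr).
\end{equation}
Regularity is then free: neither $\operatorname{det}\mathbf{a}$ nor $\operatorname{det}\mathbf{b}$ is identically zero, and their product vanishes only on the discrete set $\Lambda(\mathbf{a})\cup\Lambda(\mathbf{b})$, off which $z\mathbf{D_C}-\mathbf{C}$ is invertible.

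For the resolvent I would invert the block-triangular matrix directly via $\left[\begin{smallmatrix}\mathbf{U}&\mathbf{V}\\\mathbf{0}&\mathbf{W}\end{smallmatrix}\right]^{-1}=\left[\begin{smallmatrix}\mathbf{U}^{-1}&-\mathbf{U}^{-1}\mathbf{V}\mathbf{W}^{-1}\\\mathbf{0}&\mathbf{W}^{-1}\end{smallmatrix}\right]$ with $\mathbf{U}=z\mathbf{D_B}-\mathbf{B}$, $\mathbf{V}=\mathbf{Y}_\mathbf{B}\mathbf{X}_\mathbf{A}$, $\mathbf{W}=z\mathbf{D_A}-\mathbf{A}$, so that the $(1,2)$ block of the inverse is $-(z\mathbf{D_B}-\mathbf{B})^{-1}\mathbf{Y}_\mathbf{B}\mathbf{X}_\mathbf{A}(z\mathbf{D_A}-\mathbf{A})^{-1}$. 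Choosing the selection matrices $\mathbf{X}_\mathbf{C}=\left[\begin{array}{cc}\mathbf{X}_\mathbf{B}&\mathbf{0}\end{array}\right]$ and $\mathbf{Y}_\mathbf{C}=\left[\begin{array}{c}\mathbf{0}\\-\mathbf{Y}_\mathbf{A}\end{array}\right]$, the product $\mathbf{X}_\mathbf{C}(z\mathbf{D_C}-\mathbf{C})^{-1}\mathbf{Y}_\mathbf{C}$ collapses to $\mathbf{X}_\mathbf{B}(z\mathbf{D_B}-\mathbf{B})^{-1}\mathbf{Y}_\mathbf{B}\,\mathbf{X}_\mathbf{A}(z\mathbf{D_A}-\mathbf{A})^{-1}\mathbf{Y}_\mathbf{A}=\mathbf{b}^{-1}(z)\mathbf{a}^{-1}(z)=\bigl(\mathbf{a}(z)\mathbf{b}(z)\bigr)^{-1}$, the claimed resolvent form. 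If a presentation uniform with Theorem~\ref{thm:1} is wanted, one can instead write $z\mathbf{D_C}-\mathbf{C}=\mathbf{P}_1\mathbf{P}_2$ with the same one-step block elimination and read both claims off $\mathbf{P}_1$ and $\mathbf{P}_2$; since the coupling is one-sided the Schur complement is just $z\mathbf{D_A}-\mathbf{A}$, so no genuinely new computation appears beyond Theorem~\ref{thm:1}.

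I expect the only real obstacle to be non-commutative bookkeeping rather than anything deep: one must keep straight that it is $\mathbf{b}$ — not $\mathbf{a}$ — that occupies the $(1,1)$ block, that the coupling is $\mathbf{Y}_\mathbf{B}\mathbf{X}_\mathbf{A}$ (not $\mathbf{Y}_\mathbf{A}\mathbf{X}_\mathbf{B}$), and that the extraction picks up $\mathbf{X}_\mathbf{B}$ on the left and $\mathbf{Y}_\mathbf{A}$ on the right; transposing any of these would give $\mathbf{a}^{-1}(z)\mathbf{b}^{-1}(z)$ in place of $\bigl(\mathbf{a}(z)\mathbf{b}(z)\bigr)^{-1}$, or an outright dimension mismatch. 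The lone minus sign in $\mathbf{Y}_\mathbf{C}$ is cosmetic and could be absorbed into $\mathbf{X}_\mathbf{C}$ or into $\mathbf{C}$ instead. Everything else — the dimension counts, and the fact that $(\mathbf{D_C},\mathbf{C})$ is an honest linearization of $\mathbf{a}(z)\mathbf{b}(z)$ of size $r(s+t)$ — then follows by inspection.
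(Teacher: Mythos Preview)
Your proposal is correct and matches the paper's approach essentially line for line: the paper's $\mathbf{F}_2$ is your $\mathbf{C}$ up to the sign of the off-diagonal coupling block (compensated, as you note, by the sign in $\mathbf{Y}_\mathbf{C}$), and both the determinant and the resolvent are read off from the same block-triangular formulas you use. The paper additionally writes out the block lower-triangular variant $\mathbf{F}_1$ with $\mathbf{A}$ in the leading position, which you mention parenthetically.
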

\noindent
Then if we define 
\begin{equation}
	\mathbf{F}_1 = \left[
		\begin{array}{cc}
			\mathbf{A} & \mathbf{0} \\
			\mathbf{Y}_\mathbf{B}\mathbf{X}_\mathbf{A} & \mathbf{B}
		\end{array}
	\right]
	\quad \text{and} \quad
	\mathbf{D}_{\mathbf{F}_1} =
	\left[
		\begin{array}{cc}
			\mathbf{D_A} & \mathbf{0} \\
			\mathbf{0} & \mathbf{D_B}
		\end{array}
	\right]
	\label{eqn:F1}
\end{equation}
or similarly
\begin{equation}
	\mathbf{F}_2 = 
	\left[
		\begin{array}{cc}
			\mathbf{B} & \mathbf{Y}_\mathbf{B}\mathbf{X}_\mathbf{A} \\
			\mathbf{0} & \mathbf{A}
		\end{array}
	\right]
	\quad \text{and} \quad
	\mathbf{D}_{\mathbf{F}_2} =
	\left[
		\begin{array}{cc}
			\mathbf{D_B} & \mathbf{0} \\
			\mathbf{0} & \mathbf{D_A}
		\end{array}
	\right] \>,
	\label{eqn:F2}
\end{equation}
then $z\mathbf{D}_{\mathbf{F}_{1}} - \mathbf{F}_{1}$ and $z\mathbf{D}_{\mathbf{F}_{2}} - \mathbf{F}_{2}$ are linearizations for $\mathbf{a}(z)\mathbf{b}(z)$.

\begin{proof}
Consider $\mathbf{F}_{1}$, $\mathbf{D}_{\mathbf{F}_{1}}$, $\mathbf{F}_{2}$, and $\mathbf{D}_{\mathbf{F}_{2}}$ as in equations \eqref{eqn:F1} and \eqref{eqn:F2} shown above. Clearly $\mathbf{A}$ and $\mathbf{B}$ can be exchanged in either factor to get new but related constructions. Then
\begin{align}
	\operatorname{det}\left(z\mathbf{D}_{\mathbf{F}_1} - \mathbf{F}_1\right) &= \operatorname{det}
	\left(
		\begin{array}{cc}
			z\mathbf{D_A} - \mathbf{A} & \mathbf{0} \\
			-\mathbf{Y}_\mathbf{B}\mathbf{X}_\mathbf{A} & z\mathbf{D_B} - \mathbf{B}
		\end{array}
	\right) \\
	&= \operatorname{det}\mathbf{a}(z) \operatorname{det}\mathbf{b}(z) \\
	&= \operatorname{det} \mathbf{a}(z)\mathbf{b}(z)
\end{align}
and moreover
\begin{align}
	&\left(z\mathbf{D}_{\mathbf{F}_1} - \mathbf{F}_1\right)^{-1} = \\
	&\left[
		\begin{array}{cc}
			\left(z\mathbf{D_A} - \mathbf{A}\right)^{-1} & \mathbf{0} \\
			\left(z\mathbf{D_B} - \mathbf{B}\right)^{-1}\mathbf{Y}_\mathbf{B}\mathbf{X}_\mathbf{A}\left(z\mathbf{D_A} - \mathbf{A}\right)^{-1} & \left(z\mathbf{D_B} - \mathbf{B}\right)^{-1}
		\end{array}
	\right]
\end{align}
so
\begin{equation}
	\left[
		\begin{array}{cc}
			\mathbf{0} & \mathbf{X}_\mathbf{B}
		\end{array}
	\right]
	\left(z\mathbf{D}_{\mathbf{F}_1} - \mathbf{F}_1\right)^{-1}
	\left[
		\begin{array}{c}
			\mathbf{Y}_\mathbf{A} \\
			\mathbf{0}
		\end{array}
	\right]
	= \mathbf{b}^{-1}(z) \mathbf{a}^{-1}(z) \>.
\end{equation}

Reversing $\mathbf{A}$ and $\mathbf{B}$ in $\mathbf{F}_1$ gives instead $\mathbf{a}^{-1}(z)\mathbf{b}^{-1}(z)$. Similarly
\begin{equation}
	z\mathbf{D}_{\mathbf{F}_2} - \mathbf{F}_2 =
	\left[
		\begin{array}{cc}
			z\mathbf{D_B} - \mathbf{B} & -\mathbf{Y}_{\mathbf{B}}\mathbf{X}_\mathbf{A} \\
			\mathbf{0} & z\mathbf{D_A} - \mathbf{A}
		\end{array}
	\right] \>
\end{equation}
so again $\operatorname{det}\left(z\mathbf{D}_{\mathbf{F}_2} - \mathbf{F}_2\right) = \operatorname{det}\mathbf{b}(z)\operatorname{det}\mathbf{a}(z) = \operatorname{det}\left(\mathbf{b}(z)\mathbf{a}(z)\right)$. Moreover
\begin{align}
	&\left(z\mathbf{D}_{\mathbf{F}_2} - \mathbf{F}_2 \right)^{-1} =\\
	&\left[
		\begin{array}{cc}
			\left(z\mathbf{D_B} - \mathbf{B}\right)^{-1} & \left(z\mathbf{D_B} - \mathbf{B}\right)^{-1}\mathbf{Y}_\mathbf{B}\mathbf{X}_\mathbf{A}\left(z\mathbf{D_A} - \mathbf{A}\right)^{-1} \\
			\mathbf{0} & \left(z\mathbf{D_A} - \mathbf{A}\right)^{-1}
		\end{array}
	\right]
\end{align}
so
\begin{equation}
	\left[
		\begin{array}{cc}
			\mathbf{X}_\mathbf{B} & \mathbf{0}
		\end{array}
	\right]
	\left(z\mathbf{D}_{\mathbf{F}_2} - \mathbf{F}_2\right)^{-1}
	\left[
		\begin{array}{c}
			\mathbf{0} \\
			\mathbf{Y}_\mathbf{A}
		\end{array}
	\right]
	= \mathbf{b}^{-1}(z)\mathbf{a}^{-1}(z) \>.
\end{equation}
\end{proof}

\begin{remark}
Theorem \ref{thm:2} is just Theorem 3.2 from \cite[p.~85]{gohberg2009matrix} with two minor modifications: non-monic $\mathbf{a}(z)$ is covered here, and we will use $\mathbf{F}_2$ to give a block upper Hessenberg matrix whereas they use $\mathbf{F}_1$. That seems paradoxical because $\mathbf{F}_1$ looks more likely to generate block upper Hessenberg matrices, but when used recursively the lower left triangle remains empty when $\mathbf{X}_\mathbf{A}$ and $\mathbf{Y}_\mathbf{B}$ are $e_s^T \otimes I_r$ and $f_1 \otimes I_r$. We will need the upper right block for the constant coefficient added.
\end{remark}

Theorems \ref{thm:3} and \ref{thm:4} show how to linearize $\mathbf{a}(z) + \mathbf{c}(z)$ if $\mathrm{deg}(\mathbf{c}(z)) < \mathrm{deg}(\mathbf{a}(z))$. Theorem \ref{thm:3} considers the monic case for $\mathbf{a}(z)$, and Theorem \ref{thm:4} relaxes this restriction.
\begin{theorem}[monic case]\label{thm:3}
Suppose $\mathbf{a}(z) = z^s + \boldsymbol{\alpha}_{s-1}z^{s-1} + \cdots + \boldsymbol{\alpha}_0$ and each $\boldsymbol{\alpha}_{k} \in \mathbb{C}^{r \times r}$, and that we have a block upper Hessenberg linearization $\mathbf{A}$ of $\mathbf{a}(z)$ with standard triple $\mathbf{X}_\mathbf{A}$, $\mathbf{A}$, $\mathbf{Y}_\mathbf{A}$ which means among other things that
\begin{equation}
	\mathbf{X}_\mathbf{A}\left(z\mathbf{I}_{sr} - \mathbf{A}\right)^{-1}\mathbf{Y}_\mathbf{A} = \mathbf{a}^{-1}(z) \>.
\end{equation}
Then if $\mathbf{c}(z) = \mathbf{c}_{s-1}z^{s-1} + \cdots + \mathbf{c}_1z + \mathbf{c}_0$, with each $\mathbf{c}_i \in \mathbb{C}^{r \times r}$, is of degree at most $s - 1$, then 
\begin{equation}
	\mathbf{G} = \mathbf{A} - \sum_{k = 0}^{s - 1} \mathbf{A}^{k}\mathbf{Y}_\mathbf{A}\mathbf{c}_k\mathbf{X}_\mathbf{A}
\end{equation}
is a block upper Hessenberg linearization of $\mathbf{a}(z) + \mathbf{c}(z)$, with 
\begin{equation}
	\mathbf{X}_\mathbf{A}\left(z\mathbf{I} - \mathbf{G}\right)^{-1}\mathbf{Y}_\mathbf{A} = \left(\mathbf{a}(z) + \mathbf{c}(z)\right)^{-1} \>.
\end{equation} 
\end{theorem}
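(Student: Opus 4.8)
The plan is to use the Woodbury-type identity to compute the resolvent of $z\mathbf{I}-\mathbf{G}$ directly, and then read off both the determinant identity and the resolvent form from it. Write $\mathbf{G} = \mathbf{A} - \mathbf{Y}_{\mathbf{C}}\mathbf{X}_{\mathbf{A}}$, where I collect the rank-$(\le rs)$ correction as $\mathbf{Y}_{\mathbf{C}} := \sum_{k=0}^{s-1}\mathbf{A}^{k}\mathbf{Y}_\mathbf{A}\mathbf{c}_k$, so the perturbation has the factored form $\mathbf{Y}_{\mathbf{C}}\mathbf{X}_{\mathbf{A}}$ with $\mathbf{X}_{\mathbf{A}}\in\mathbb{C}^{r\times rs}$ on the right. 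First I would apply the Sherman--Morrison--Woodbury formula:
\begin{equation}
	\left(z\mathbf{I}-\mathbf{G}\right)^{-1} = \left(z\mathbf{I}-\mathbf{A}\right)^{-1} - \left(z\mathbf{I}-\mathbf{A}\right)^{-1}\mathbf{Y}_{\mathbf{C}}\left(\mathbf{X}_\mathbf{A}\left(z\mathbf{I}-\mathbf{A}\right)^{-1}\mathbf{Y}_{\mathbf{C}} - \mathbf{I}_r\right)^{-1}\mathbf{X}_\mathbf{A}\left(z\mathbf{I}-\mathbf{A}\right)^{-1} \>,
\end{equation}
valid wherever the inner $r\times r$ matrix is invertible.

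The key computational step is to evaluate $\mathbf{X}_\mathbf{A}\left(z\mathbf{I}-\mathbf{A}\right)^{-1}\mathbf{Y}_{\mathbf{C}}$ in closed form. Using the standard-triple identities for $(\mathbf{X}_\mathbf{A},\mathbf{A},\mathbf{Y}_\mathbf{A})$, in particular $\mathbf{X}_\mathbf{A}\left(z\mathbf{I}-\mathbf{A}\right)^{-1}\mathbf{A}^{k}\mathbf{Y}_\mathbf{A}$ equals the ``$k$-th moment'' of the resolvent, I expect $\mathbf{X}_\mathbf{A}\left(z\mathbf{I}-\mathbf{A}\right)^{-1}\mathbf{A}^{k}\mathbf{Y}_\mathbf{A} = z^{k}\mathbf{a}^{-1}(z) - \pi_{k}(z)$ for a matrix polynomial $\pi_k$ of degree $<k$ (this is the matrix analogue of $x^k/a(x) = \text{polynomial part} + \text{proper remainder}$, obtainable from the resolvent identity $\mathbf{A}\left(z\mathbf{I}-\mathbf{A}\right)^{-1} = z\left(z\mathbf{I}-\mathbf{A}\right)^{-1}-\mathbf{I}$ iterated $k$ times and the fact that $\mathbf{X}_\mathbf{A}\mathbf{A}^{j}\mathbf{Y}_\mathbf{A}$ vanishes for $j<s-1$ and equals $\mathbf{I}_r$ at $j=s-1$). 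Summing against $\mathbf{c}_k$ and using $\mathbf{c}(z)=\sum \mathbf{c}_k z^k$, the polynomial parts assemble into $\mathbf{c}(z)\mathbf{a}^{-1}(z)$ while the remainder terms $\sum_k \pi_k(z)\mathbf{c}_k$ — each of degree $<s-1$ in total — must cancel; I would verify this cancellation by checking it is a polynomial that is also $O(1/z)$ at infinity (since every other term is), hence zero. This gives the clean identity $\mathbf{X}_\mathbf{A}\left(z\mathbf{I}-\mathbf{A}\right)^{-1}\mathbf{Y}_{\mathbf{C}} = \mathbf{c}(z)\mathbf{a}^{-1}(z)$.

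Substituting back, the inner factor becomes $\mathbf{c}(z)\mathbf{a}^{-1}(z)-\mathbf{I}_r = -\left(\mathbf{a}(z)+\mathbf{c}(z)\right)\mathbf{a}^{-1}(z)$ up to sign, whose inverse is $-\mathbf{a}(z)\left(\mathbf{a}(z)+\mathbf{c}(z)\right)^{-1}$. Then
\begin{equation}
	\mathbf{X}_\mathbf{A}\left(z\mathbf{I}-\mathbf{G}\right)^{-1}\mathbf{Y}_\mathbf{A} = \mathbf{a}^{-1}(z) + \mathbf{a}^{-1}(z)\mathbf{a}(z)\left(\mathbf{a}(z)+\mathbf{c}(z)\right)^{-1}\mathbf{a}(z)\mathbf{a}^{-1}(z) \>,
\end{equation}
which after cancellation should need a small rearrangement (using $\mathbf{X}_\mathbf{A}(z\mathbf{I}-\mathbf{A})^{-1}\mathbf{Y}_{\mathbf{C}}\mathbf{X}_\mathbf{A}(z\mathbf{I}-\mathbf{A})^{-1}\mathbf{Y}_\mathbf{A}$ on the middle) to collapse to $\left(\mathbf{a}(z)+\mathbf{c}(z)\right)^{-1}$ — I would double-check the bookkeeping here, since the placement of the leftover $\mathbf{Y}_{\mathbf{C}}\mathbf{X}_{\mathbf{A}}$ between the two resolvents is where an error is easiest to make; the point is that $\mathbf{X}_\mathbf{A}(z\mathbf{I}-\mathbf{A})^{-1}\mathbf{Y}_{\mathbf{C}}\mathbf{X}_\mathbf{A}(z\mathbf{I}-\mathbf{A})^{-1}\mathbf{Y}_\mathbf{A} = \mathbf{c}(z)\mathbf{a}^{-1}(z)\mathbf{a}^{-1}(z)$ is \emph{not} quite what appears, so one must instead factor $\mathbf{Y}_{\mathbf{C}} = \mathbf{Y}'_{\mathbf{C}}$ carefully and track that the right-hand resolvent ends in $\mathbf{Y}_\mathbf{A}$. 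For the determinant statement, I would use the matrix determinant lemma $\operatorname{det}(z\mathbf{I}-\mathbf{G}) = \operatorname{det}(z\mathbf{I}-\mathbf{A})\operatorname{det}\!\left(\mathbf{I}_r - \mathbf{X}_\mathbf{A}(z\mathbf{I}-\mathbf{A})^{-1}\mathbf{Y}_{\mathbf{C}}\right) = \operatorname{det}\mathbf{a}(z)\cdot\operatorname{det}\!\left(\mathbf{I}_r - \mathbf{c}(z)\mathbf{a}^{-1}(z)\right) = \operatorname{det}\!\left(\mathbf{a}(z)+\mathbf{c}(z)\right)$, where the sign works out because $\mathbf{c}$ has degree $<s$. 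Finally I would note that $\mathbf{G}$ stays block upper Hessenberg: since $\mathbf{A}$ is block upper Hessenberg with standard triple $\mathbf{X}_\mathbf{A}=\mathbf{e}_1^T\otimes\mathbf{I}_r$ and $\mathbf{Y}_\mathbf{A}=\mathbf{e}_s\otimes\mathbf{I}_r$ (or the transposed convention), each $\mathbf{A}^{k}\mathbf{Y}_\mathbf{A}$ fills only columns within the last block column's span and $\mathbf{X}_\mathbf{A}$ picks out the first block row, so $\sum_k \mathbf{A}^{k}\mathbf{Y}_\mathbf{A}\mathbf{c}_k\mathbf{X}_\mathbf{A}$ is supported on the first block column and cannot create nonzero entries below the subdiagonal. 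The main obstacle I anticipate is exactly the moment identity $\mathbf{X}_\mathbf{A}(z\mathbf{I}-\mathbf{A})^{-1}\mathbf{A}^{k}\mathbf{Y}_\mathbf{A} = z^k\mathbf{a}^{-1}(z) - (\text{proper part})$ together with the vanishing of $\sum_k (\text{proper part})\,\mathbf{c}_k$; everything else is standard Schur/Woodbury manipulation.
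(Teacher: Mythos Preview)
Your approach is genuinely different from the paper's and, modulo some bookkeeping slips, it works. The paper does not use Woodbury at all: it shows that the controllability-type matrix $\mathbf{V}=[\mathbf{Y}_\mathbf{A}\;\mathbf{A}\mathbf{Y}_\mathbf{A}\;\cdots\;\mathbf{A}^{s-1}\mathbf{Y}_\mathbf{A}]$ conjugates $\mathbf{A}$ to the second companion form $\mathbf{C}_2$, observes that adding $\mathbf{c}(z)$ changes only the last block column of $\mathbf{C}_2$, and then pulls that perturbation back through $\mathbf{V}$ (using $\mathbf{X}_\mathbf{A}=[\mathbf{0}\,\cdots\,\mathbf{0}\;\mathbf{I}]\mathbf{V}^{-1}$) to obtain $\mathbf{W}=-\sum_k\mathbf{A}^{k}\mathbf{Y}_\mathbf{A}\mathbf{c}_k\mathbf{X}_\mathbf{A}$. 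Your Woodbury route is more computational but more self-contained: it delivers the resolvent identity and the determinant identity simultaneously, without passing through an auxiliary companion form.

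Two corrections will make your argument clean. First, with $z\mathbf{I}-\mathbf{G}=(z\mathbf{I}-\mathbf{A})+\mathbf{Y}_{\mathbf{C}}\mathbf{X}_\mathbf{A}$ the Woodbury middle factor is $\bigl(\mathbf{I}_r+\mathbf{X}_\mathbf{A}(z\mathbf{I}-\mathbf{A})^{-1}\mathbf{Y}_{\mathbf{C}}\bigr)^{-1}$, not $(\cdot-\mathbf{I}_r)^{-1}$. Second, your ``main obstacle'' evaporates: iterating $(z\mathbf{I}-\mathbf{A})^{-1}\mathbf{A}=z(z\mathbf{I}-\mathbf{A})^{-1}-\mathbf{I}$ gives
\[
\mathbf{X}_\mathbf{A}(z\mathbf{I}-\mathbf{A})^{-1}\mathbf{A}^{k}\mathbf{Y}_\mathbf{A}
= z^{k}\mathbf{a}^{-1}(z)-\sum_{j=0}^{k-1}z^{j}\,\mathbf{X}_\mathbf{A}\mathbf{A}^{\,k-1-j}\mathbf{Y}_\mathbf{A},
\]
and for $0\le k\le s-1$ every $\mathbf{X}_\mathbf{A}\mathbf{A}^{m}\mathbf{Y}_\mathbf{A}$ with $m\le s-2$ vanishes by the standard-triple biorthogonality, so the ``proper part'' is identically zero --- there is nothing to cancel. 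Hence $\mathbf{X}_\mathbf{A}(z\mathbf{I}-\mathbf{A})^{-1}\mathbf{Y}_{\mathbf{C}}=\mathbf{a}^{-1}(z)\mathbf{c}(z)$ (note the order: $\mathbf{c}_k$ sits to the right of $\mathbf{Y}_\mathbf{A}$, so you get $\mathbf{a}^{-1}\mathbf{c}$, not $\mathbf{c}\,\mathbf{a}^{-1}$). The middle factor is then $\mathbf{I}+\mathbf{a}^{-1}\mathbf{c}=\mathbf{a}^{-1}(\mathbf{a}+\mathbf{c})$, and sandwiching by $\mathbf{X}_\mathbf{A}(\cdot)\mathbf{Y}_\mathbf{A}$ collapses immediately to $(\mathbf{a}+\mathbf{c})^{-1}$ with no delicate rearrangement; the determinant lemma likewise gives $\det\mathbf{a}\cdot\det\bigl(\mathbf{a}^{-1}(\mathbf{a}+\mathbf{c})\bigr)=\det(\mathbf{a}+\mathbf{c})$. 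For the Hessenberg claim your idea is right but the convention is reversed: in this paper $\mathbf{X}_\mathbf{A}$ selects the \emph{last} block and $\mathbf{Y}_\mathbf{A}$ the \emph{first}, so $\mathbf{Y}_{\mathbf{C}}\mathbf{X}_\mathbf{A}$ lives in the last block column, which indeed preserves block upper Hessenberg form.
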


\begin{proof}
Using the properties of a standard triple \cite[see Proposition 2.1 (i), p 53]{gohberg2009matrix} the matrix
\begin{equation}
	\mathbf{V} = 
	\left[
		\begin{array}{ccccc}
			\mathbf{Y}_\mathbf{A} & \mathbf{A}\mathbf{Y}_\mathbf{A} & \mathbf{A}^2\mathbf{Y}_\mathbf{A} & \cdots & \mathbf{A}^{s-1}\mathbf{Y}_\mathbf{A}
		\end{array}
	\right]
\end{equation}
is nonsingular. Put $\mathbf{V}_k = \mathbf{A}^{k-1}\mathbf{Y}_\mathbf{A}$ for $1 \leq k \leq s$. Note each $\mathbf{V}_k$ is $sr$ by $r$. Then direct computation shows
\begin{equation}
	\mathbf{A}
	\left[
		\begin{array}{cccc}
			\mathbf{V}_1 & \mathbf{V}_2 & \ldots & \mathbf{V}_s 
		\end{array}
	\right] =
	\left[
		\begin{array}{ccccc}
			\mathbf{V}_2 & \mathbf{V}_3 & \cdots & \mathbf{V}_s & \mathbf{A}^{s}\mathbf{Y}_\mathbf{A}
		\end{array}
	\right]
\end{equation}
By part (iii) of the previously mentioned proposition,
\begin{equation}
	\mathbf{A}^{s}\mathbf{Y}_\mathbf{A} = -\sum_{k = 1}^{s} \mathbf{A}^{k-1}\mathbf{Y}_{\mathbf{A}}\boldsymbol{\alpha}_{k-1} \>,
\end{equation}
meaning that the given matrix polynomial is ``solved" by its linearization times $\mathbf{Y}_\mathbf{A}$ (a generalization of the Cayley-Hamilton theorem).

Thus
\begin{equation}
	\mathbf{A}\mathbf{V} = \mathbf{V}
	\left[
		\begin{array}{ccccc}
			\mathbf{0} & \mathbf{0} & \cdots & \mathbf{0} & -\boldsymbol{\alpha}_0 \\
			\mathbf{I} & \mathbf{0} & & & -\boldsymbol{\alpha}_1 \\
			& \mathbf{I} & \ddots & & \vdots \\
			& & \ddots & \mathbf{0} & -\boldsymbol{\alpha}_{s-2} \\
			& & & \mathbf{I} & -\boldsymbol{\alpha}_{s-1}
		\end{array}
	\right] = \mathbf{V}\mathbf{C}_2
\end{equation}
where $\mathbf{C}_2$ is the familiar ``second companion linearization", making explicit the similarity $\mathbf{A} = \mathbf{V}\mathbf{C}_2\mathbf{V}^{-1}$. Quite clearly the second companion linearization of $\mathbf{a}(z) + \mathbf{c}(z)$ is
\begin{equation}
	\left[
		\begin{array}{cccc}
			\mathbf{0} & & & -\left(\boldsymbol{\alpha}_0 + \mathbf{c}_0\right) \\
			\mathbf{I} & \mathbf{0} & & -\left(\boldsymbol{\alpha}_1 + \mathbf{c}_1\right) \\
			& \mathbf{I} & & \vdots \\
			& & \ddots & \\
			& & & \mathbf{I} - \left(\boldsymbol{\alpha}_{s-1} + \mathbf{c}_{s-1}\right)
		\end{array}
	\right]
\end{equation}
and we look for a matrix $\mathbf{W}$ such that $\mathbf{A} + \mathbf{W}$ linearizes $\mathbf{a}(z) + \mathbf{c}(z)$. Now
\begin{equation}
	\mathbf{W}\mathbf{V} = \mathbf{V}\Delta \mathbf{C}_2 = \mathbf{V}
	\left[
		\begin{array}{ccccc}
			\mathbf{0} & \mathbf{0} & \cdots & \mathbf{0} & -\mathbf{c}_0 \\
			\mathbf{0} & \cdots & \cdots & \mathbf{0} & -\mathbf{c}_1 \\
			\vdots & \ddots & & \vdots & \vdots \\
			\mathbf{0} & \cdots & \cdots & \mathbf{0} & -\mathbf{c}_{s-1} 
		\end{array}
	\right]
\end{equation}
implies
\begin{align}
	\mathbf{W} &= \mathbf{V}
	\left[
		\begin{array}{cccc}
			\mathbf{0} & \cdots & \mathbf{0} & -\mathbf{c}_0 \\
			\vdots & & \vdots & -\mathbf{c}_1 \\
			\vdots & & \vdots & \vdots \\
			\mathbf{0} & \cdots & \mathbf{0} & -\mathbf{c}_{s-1}
		\end{array}
	\right]
	\mathbf{V}^{-1} \\
	&=
	\left[
		\begin{array}{cccc}
			\mathbf{0} & \cdots & \mathbf{0} & \multirow{3}{*}{$-\sum_{k=1}^{s}\mathbf{V}_k\mathbf{c}_{k-1}$} \\
			\vdots & & \vdots &  \\
			\mathbf{0} & \cdots & \mathbf{0} &
		\end{array}
	\right]
	\mathbf{V}^{-1} \\
	&= -\sum_{k=1}^{s}\mathbf{V}_k \mathbf{c}_{k-1}\mathbf{X}_\mathbf{A} = -\sum_{k=1}^{s}\mathbf{A}^{k-1}\mathbf{Y}_{\mathbf{A}}\mathbf{c}_{k-1}\mathbf{X}_\mathbf{A}
\end{align}
as desired, because property (ii) of Proposition 2.1 in \cite{gohberg2009matrix} has $\mathbf{X}_\mathbf{A}$ uniquely defined as $\left[ 
\begin{array}{cccc} \mathbf{0} & \cdots & \mathbf{0} & \mathbf{I} \end{array}\right] \cdot \mathbf{V}^{-1}$ in our notation. This proves the theorem.
\end{proof}

\begin{theorem}[non-monic case] \label{thm:4}
	Suppose $\mathbf{a}(z) = \boldsymbol{\alpha}_{s}z^s + \boldsymbol{\alpha}_{s-1} + \cdots + \boldsymbol{\alpha}_0$ and $\boldsymbol{\alpha}_s$ might be singular. Suppose that we have a block upper Hessenberg generalized linearization $(\mathbf{A}, \mathbf{D_A})$---that is, $\mathbf{A}$ is block upper Hessenberg, $\mathbf{D_A}$ is block diagonal, each with $r \times r$ blocks, and that we have the generalized standard triple,
	\begin{equation}
		\mathbf{X_A}\left(z\mathbf{D_A} - \mathbf{A}\right)^{-1}\mathbf{D_A}\mathbf{Y_A} = \mathbf{a}^{-1}(z)\>.
	\end{equation}
	Then if $\mathbf{c}(z) = \sum_{k=0}^{s-1}\mathbf{c}_{k}z^{k}$, with each $\mathbf{c}_{i} \in \mathbb{C}^{r \times r}$, is of degree at most $s-1$, then 
\begin{equation}
	\mathbf{G} = \mathbf{A} - \sum_{k=0}^{s-1} \mathbf{A}^{k}\mathbf{Y_A}\mathbf{c}_{k}\mathbf{X_A} 
\end{equation}
is a block upper Hessenberg linearization of $\mathbf{a}(z) + \mathbf{c}(z)$, with 
\begin{equation}
	\mathbf{X_A}\left(z\mathbf{D_A} - \mathbf{G}\right)^{-1}\mathbf{D_A}\mathbf{Y_A} = \left(\mathbf{a}(z) + \mathbf{c}(z)\right)^{-1} \>.
\end{equation}
\end{theorem}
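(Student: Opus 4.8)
The plan is to follow the proof of Theorem~\ref{thm:3} almost line for line, the only genuinely new feature being that a possibly singular leading coefficient $\boldsymbol{\alpha}_s$ forces us to conjugate to the companion \emph{pencil} of $\mathbf{a}(z)$ instead of to a single companion matrix. First I would invoke the spectral theory of matrix polynomials with singular leading coefficient \cite[Ch.~7]{gohberg2009matrix}: from the hypothesis $\mathbf{X_A}(z\mathbf{D_A}-\mathbf{A})^{-1}\mathbf{D_A}\mathbf{Y_A}=\mathbf{a}^{-1}(z)$ one extracts an invertible ``generalized Vandermonde'' matrix $\mathbf{V}$ built from the moments $\mathbf{Y_A},\mathbf{A}\mathbf{Y_A},\dots,\mathbf{A}^{s-1}\mathbf{Y_A}$ (with the appropriate $\mathbf{D_A}$-twist in the top slot), a transformation carrying $z\mathbf{D_A}-\mathbf{A}$ to the first companion pencil $z\mathbf{D}^{\mathbf a}-\mathbf{C}^{\mathbf a}$ of $\mathbf{a}(z)$ with $\mathbf{D}^{\mathbf a}=\operatorname{diag}(\boldsymbol{\alpha}_s,\mathbf{I},\dots,\mathbf{I})$, a generalized Cayley--Hamilton relation expressing the top moment $\mathbf{A}^{s}\mathbf{Y_A}$ through the lower ones and the $\boldsymbol{\alpha}_k$, and the identification of $\mathbf{X_A}$ with an extremal block row of $\mathbf{V}^{-1}$ (the non-monic analogue of Proposition~2.1(ii) of \cite{gohberg2009matrix}, already used in Theorem~\ref{thm:3}). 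It is this last fact that will let a $\mathbf{V}$-conjugation be rewritten in terms of $\mathbf{X_A}$ and the moments $\mathbf{A}^{k}\mathbf{Y_A}$.

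Next I would use the structural observation that, because $\deg\mathbf{c}(z)\le s-1$, the polynomial $\mathbf{a}(z)+\mathbf{c}(z)$ has the \emph{same} leading coefficient $\boldsymbol{\alpha}_s$ as $\mathbf{a}(z)$, so its first companion pencil is $z\mathbf{D}^{\mathbf a}-(\mathbf{C}^{\mathbf a}+\Delta)$ with the \emph{same} $\mathbf{D}^{\mathbf a}$, where $\Delta$ has nonzero blocks $-\mathbf{c}_0,\dots,-\mathbf{c}_{s-1}$ only in the single block row (or column) that carries the lower-order coefficients. Transporting this correction back through $\mathbf{V}$, I would determine $\mathbf{W}=\mathbf{G}-\mathbf{A}$ by $\mathbf{W}\mathbf{V}=\mathbf{V}\Delta$, i.e.\ $\mathbf{W}=\mathbf{V}\Delta\mathbf{V}^{-1}$; expanding $\mathbf{V}\Delta$ blockwise and using the moments $\mathbf{A}^{k}\mathbf{Y_A}$ for the columns of $\mathbf{V}$ together with the formula for $\mathbf{X_A}$ collapses this to $\mathbf{W}=-\sum_{k=0}^{s-1}\mathbf{A}^{k}\mathbf{Y_A}\mathbf{c}_k\mathbf{X_A}$, which is the $\mathbf{G}$ of the statement and, crucially, leaves $\mathbf{D_A}$ untouched in accordance with $\mathbf{D}^{\mathbf a}$ being unchanged. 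Then $z\mathbf{D_A}-\mathbf{G}$ is equivalent to $z\mathbf{D}^{\mathbf a}-(\mathbf{C}^{\mathbf a}+\Delta)$, hence is a linearization of $\mathbf{a}(z)+\mathbf{c}(z)$, and pulling the relevant block of the companion resolvent back through $\mathbf{V}$ and $\mathbf{D_A}$ yields $\mathbf{X_A}(z\mathbf{D_A}-\mathbf{G})^{-1}\mathbf{D_A}\mathbf{Y_A}=(\mathbf{a}(z)+\mathbf{c}(z))^{-1}$. As a cross-check on this last identity one can also run a Sherman--Morrison--Woodbury computation on the update $z\mathbf{D_A}-\mathbf{G}=(z\mathbf{D_A}-\mathbf{A})+\big(\sum_k\mathbf{A}^{k}\mathbf{Y_A}\mathbf{c}_k\big)\mathbf{X_A}$, which reduces everything to knowing the moments $\mathbf{X_A}(z\mathbf{D_A}-\mathbf{A})^{-1}\mathbf{A}^{k}\mathbf{Y_A}$.

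Finally I would check that $\mathbf{G}$ is still block upper Hessenberg: each summand $\mathbf{A}^{k}\mathbf{Y_A}\mathbf{c}_k\mathbf{X_A}$ is the product of a single block column $\mathbf{A}^{k}\mathbf{Y_A}$ with a single block row $\mathbf{c}_k\mathbf{X_A}$, and for the linearizations produced by Theorems~\ref{thm:1}--\ref{thm:2} the rows $\mathbf{X_A}$ and columns $\mathbf{Y_A}$ pick out an extremal block, so the Hessenberg pattern of $\mathbf{A}$ confines the support of $\mathbf{A}^{k}\mathbf{Y_A}$ to its first $k+1$ block rows; since $k\le s-1$ the correction never spills below the block subdiagonal, exactly the bookkeeping used for the Mandelbrot matrices in Section~\ref{sec:3} and in Theorem~\ref{thm:3}. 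The hard part is the first step: making the singular-leading-coefficient spectral machinery precise, so that the generalized Vandermonde matrix, the extremal block row of its inverse, and the generalized Cayley--Hamilton relation are stated in a form compatible with keeping $\mathbf{D_A}$ fixed and with the plain powers $\mathbf{A}^{k}$ that appear in $\mathbf{G}$ (rather than some $\mathbf{D_A}$-weighted variant, which would be ill-defined when $\mathbf{D_A}$ is singular). Once that foundation is laid, the remainder is the same telescoping-and-conjugation argument already carried out for the monic case.
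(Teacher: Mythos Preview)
Your outline is plausible but takes a genuinely different route from the paper. You attempt a direct extension of the Theorem~\ref{thm:3} argument to the non-monic setting, invoking the singular-leading-coefficient spectral theory of \cite[Ch.~7]{gohberg2009matrix} to build a generalized Vandermonde, a generalized Cayley--Hamilton relation, and a conjugation to the companion pencil $z\mathbf{D}^{\mathbf a}-\mathbf{C}^{\mathbf a}$. The paper instead bypasses all of that with a short perturbation argument: replace $\mathbf{D_A}$ by $\mathbf{D_A}+\varepsilon\mathbf{I}$, which is invertible for small $\varepsilon\neq 0$; observe that $(\mathbf{X_A},(\mathbf{D_A}+\varepsilon\mathbf{I})^{-1}\mathbf{A},\mathbf{Y_A})$ is then an honest standard triple for a perturbed polynomial $\mathbf{a}(z)+\varepsilon\Delta\mathbf{a}(z,\varepsilon)$ (the perturbation being only in the leading coefficient); apply Theorem~\ref{thm:3} verbatim to this monic-equivalent situation; and let $\varepsilon\to 0$ in the resulting resolvent identity.

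What each buys: the paper's perturbation trick is much shorter and sidesteps exactly the difficulty you flag as ``the hard part''---you never have to set up decomposable-pair machinery or worry about whether the plain powers $\mathbf{A}^k$ (as opposed to some $\mathbf{D_A}$-weighted variant) are the right moments when $\mathbf{D_A}$ is singular, because for every $\varepsilon\neq 0$ the monic theory applies directly and the formula for $\mathbf{G}$ is obtained before the limit. Your approach, if the Chapter~7 details can be made to line up with the specific form of $\mathbf{G}$ (and you are right to be cautious here), would be more self-contained and would not rely on a continuity argument that the paper leaves largely implicit. Either way, your structural observation that $\deg\mathbf{c}<s$ forces the same $\mathbf{D_A}$ on both sides is the key point, and it is also what makes the paper's limiting argument land on the stated $\mathbf{G}$ with $\mathbf{D_A}$ unchanged.
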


\begin{proof}
	If $\boldsymbol{\alpha}_s$ is singular, this also means that $\mathbf{D_A}$ will be singular. To find the resolvent form, we can perturb the matrix polynomial: $\mathbf{a}(z) + \varepsilon\Delta\mathbf{a}(z, \varepsilon)$, which we will define as perturbing just $\boldsymbol{\alpha}_{s}$. The generalized linearization for this new matrix polynomial is $\left(\mathbf{A}, \mathbf{D_A} + \varepsilon\mathbf{I}\right)$ (which defines $\Delta\mathbf{a}\left(z,\varepsilon\right)$ implicitly) and the standard triple is 
	\begin{equation}
		\left(\mathbf{X_A}, \left(\mathbf{D_A} + \varepsilon\mathbf{I}\right)^{-1}\mathbf{A}, \mathbf{Y_A}\right)
	\end{equation}
	which gives the resolvent form
	\begin{align}
		\left(\mathbf{a}(z) + \varepsilon\Delta\mathbf{a}(z, \varepsilon)\right)^{-1} &= \mathbf{X_A}\left(z\mathbf{I} - \left(\mathbf{D_A} + \varepsilon\mathbf{I}\right)^{-1}\mathbf{A}\right)^{-1}\mathbf{Y_A} \\
		&= \mathbf{X_A}\left(\left(\mathbf{D_A} + \varepsilon\mathbf{I}\right)^{-1}\left(z\left(\mathbf{D_A} + \varepsilon\mathbf{I}\right) - \mathbf{A}\right)\right)^{-1}\mathbf{Y_A} \\
		&= \mathbf{X_A}\left(z\left(\mathbf{D_A} + \varepsilon\mathbf{I}\right) - \mathbf{A}\right)^{-1}\left(\mathbf{D_A} + \varepsilon\mathbf{I}\right)\mathbf{Y_A} \>.
	\end{align}
	As $\varepsilon \to 0$,
	\begin{equation}
		\mathbf{a}^{-1}(z) = \mathbf{X_A}\left(z\mathbf{D_A} - \mathbf{A}\right)^{-1}\mathbf{D_A}\mathbf{Y_A} \>.
	\end{equation}
	Then using the proof from Theorem \ref{thm:3}, we find that 
	\begin{equation}
		\mathbf{G} = \mathbf{A} - \sum_{k=1}^{s} \mathbf{A}^{k-1}\mathbf{Y_A}\mathbf{c}_{k-1}\mathbf{X_A}
	\end{equation}
	is, again, the block upper Hessenberg linearization of $\mathbf{a}(z) + \mathbf{c}(z)$ with
	\begin{equation}
		\mathbf{X_A}\left(z\mathbf{D_A} - \mathbf{G}\right)^{-1}\mathbf{D_A}\mathbf{Y_A} = \left(\mathbf{a}(z) + \mathbf{c}(z)\right)^{-1} \>,
	\end{equation} 
as desired.
\end{proof}

We now come to the theorem that we wanted to prove, originally. The previous theorems are not used in the proof, although it seems that they could be. But because we want the $\mathbf{0}$ block between the $\mathbf{A}$ block and the $\mathbf{B}$ block, and because we want $\mathbf{c}_0$ in the upper right corner, it's better to apply the following direct proof.

\begin{theorem} \label{thm:5}
Let $\mathbf{a}(z)$, $\mathbf{A}$, $\mathbf{D_A}$, $\mathbf{b}(z)$, $\mathbf{B}$, $\mathbf{D_B}$ and their ancillaries be as in the previous theorems. Let $\mathbf{c}_0$, $\mathbf{d}_0 \in \mathbb{C}^{r \times r}$ be given. Then
\begin{equation}
	\mathbf{H} =
	\left[
		\begin{array}{ccc}
			\mathbf{A} & \mathbf{0} & -\mathbf{Y}_{\mathbf{A}}\mathbf{c}_0\mathbf{X}_\mathbf{B} \\
			-\mathbf{X}_\mathbf{A} & \mathbf{0} & \mathbf{0} \\
			\mathbf{0} & -\mathbf{Y}_{\mathbf{B}} & \mathbf{B}
		\end{array}
	\right]
\end{equation}
and
\begin{equation}
	\mathbf{D_H} =
	\left[
		\begin{array}{ccc}
			\mathbf{D_A} & & \\
			& \mathbf{d}_0 & \\
			& & \mathbf{D_B}
		\end{array}
	\right]
\end{equation}
linearize $\mathbf{h}(z) = z\mathbf{a}(z)\mathbf{d}_0\mathbf{b}(z) + \mathbf{c}_0$; we have
\begin{equation}
	\mathbf{X}_\mathbf{H} = 
	\left[
		\begin{array}{ccc}
			\mathbf{0} & \mathbf{0} & \mathbf{X}_\mathbf{B}
		\end{array}
	\right]
	\quad \text{and} \quad
	\mathbf{Y}_{\mathbf{H}} =
	\left[
		\begin{array}{c}
			\mathbf{Y}_\mathbf{A} \\
			\mathbf{0} \\
			\mathbf{0}
		\end{array}
	\right]
\end{equation}
making a standard triple with
\begin{equation}
	\mathbf{X}_\mathbf{H}\left(z\mathbf{D_H} - \mathbf{H}\right)^{-1}\mathbf{Y}_\mathbf{H} = \mathbf{h}^{-1}(z) \>.
\end{equation}
\end{theorem}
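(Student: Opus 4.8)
The plan is to imitate the Schur-complement argument used in the proof of Theorem~\ref{thm:1}, except that I would pivot on the $\mathbf{a}$--$\mathbf{b}$ corner of $z\mathbf{D_H}-\mathbf{H}$ rather than on the middle block, since $\mathbf{d}_0$ is permitted to be singular. First I would write out
\[
	z\mathbf{D_H}-\mathbf{H}=
	\left[
		\begin{array}{ccc}
			z\mathbf{D_A}-\mathbf{A} & \mathbf{0} & \mathbf{Y_A}\mathbf{c}_0\mathbf{X_B}\\
			\mathbf{X_A} & z\mathbf{d}_0 & \mathbf{0}\\
			\mathbf{0} & \mathbf{Y_B} & z\mathbf{D_B}-\mathbf{B}
		\end{array}
	\right]
\]
and reorder the block rows and the block columns by the common permutation $(1,2,3)\mapsto(1,3,2)$, which groups the two blocks carrying $\mathbf{A}$ and $\mathbf{B}$ into the leading $2\times 2$ corner and moves the $\mathbf{d}_0$ block to the trailing position without changing the determinant. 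The leading corner is then the block upper triangular matrix $\mathbf{N}=\left[\begin{array}{cc} z\mathbf{D_A}-\mathbf{A} & \mathbf{Y_A}\mathbf{c}_0\mathbf{X_B}\\ \mathbf{0} & z\mathbf{D_B}-\mathbf{B}\end{array}\right]$, for which $\det\mathbf{N}=\det\mathbf{a}(z)\det\mathbf{b}(z)$ and $\mathbf{N}^{-1}$ is explicit; the trailing corner is $z\mathbf{d}_0$, and the coupling blocks are $\left[\begin{array}{cc}\mathbf{X_A}&\mathbf{0}\end{array}\right]$ and $\left[\begin{array}{c}\mathbf{0}\\\mathbf{Y_B}\end{array}\right]$.

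Next I would form the Schur complement of $\mathbf{N}$. The one matrix product that appears telescopes, via the resolvent identities $\mathbf{X_A}(z\mathbf{D_A}-\mathbf{A})^{-1}\mathbf{Y_A}=\mathbf{a}^{-1}(z)$ and $\mathbf{X_B}(z\mathbf{D_B}-\mathbf{B})^{-1}\mathbf{Y_B}=\mathbf{b}^{-1}(z)$, down to $\mathbf{a}^{-1}(z)\mathbf{c}_0\mathbf{b}^{-1}(z)$, so the Schur complement is
\[
	\mathbf{S}(z)=z\mathbf{d}_0+\mathbf{a}^{-1}(z)\mathbf{c}_0\mathbf{b}^{-1}(z).
\]
The key observation is then the factorization $\mathbf{h}(z)=\mathbf{a}(z)\,\mathbf{S}(z)\,\mathbf{b}(z)$, obtained just by multiplying out. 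Hence $\det(z\mathbf{D_H}-\mathbf{H})=\det\mathbf{N}\cdot\det\mathbf{S}(z)=\det\mathbf{a}(z)\,\det\mathbf{S}(z)\,\det\mathbf{b}(z)=\det\mathbf{h}(z)$, exactly as in the closing lines of the proof of Theorem~\ref{thm:1}. Every identity used is rational in $z$ and valid off the discrete set $\Lambda(\mathbf{a})\cup\Lambda(\mathbf{b})\cup\Lambda(\mathbf{h})$, so it holds as a polynomial identity by continuation; regularity of the pencil $(\mathbf{D_H},\mathbf{H})$ follows from that of $\mathbf{h}(z)$.

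For the resolvent claim I would invert the reordered matrix blockwise through the same Schur factorization and then keep careful track of which block of $(z\mathbf{D_H}-\mathbf{H})^{-1}$ the product $\mathbf{X_H}(z\mathbf{D_H}-\mathbf{H})^{-1}\mathbf{Y_H}$ samples. Since $\mathbf{X_H}=\left[\begin{array}{ccc}\mathbf{0}&\mathbf{0}&\mathbf{X_B}\end{array}\right]$ reaches only the $\mathbf{B}$ block row and $\mathbf{Y_H}=\left[\begin{array}{c}\mathbf{Y_A}\\\mathbf{0}\\\mathbf{0}\end{array}\right]$ reaches only the $\mathbf{A}$ block column, I only need the $\mathbf{B}$-row/$\mathbf{A}$-column block of the inverse, which the Schur inversion formula produces as $(z\mathbf{D_B}-\mathbf{B})^{-1}\mathbf{Y_B}\,\mathbf{S}^{-1}(z)\,\mathbf{X_A}(z\mathbf{D_A}-\mathbf{A})^{-1}$. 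Flanking this with $\mathbf{X_B}$ on the left and $\mathbf{Y_A}$ on the right and telescoping once more with the same two resolvent identities gives $\mathbf{b}^{-1}(z)\mathbf{S}^{-1}(z)\mathbf{a}^{-1}(z)=\bigl(\mathbf{a}(z)\mathbf{S}(z)\mathbf{b}(z)\bigr)^{-1}=\mathbf{h}^{-1}(z)$, as claimed. I expect the only genuine obstacle to be bookkeeping: keeping the $(1,3,2)$ block reordering consistent between the determinant computation and the inversion, and correctly pinning down which sub-block of the $3\times 3$ block inverse is selected by $\mathbf{X_H}$ and $\mathbf{Y_H}$; the underlying algebra is merely the collapse of the two resolvent triples.
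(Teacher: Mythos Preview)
Your argument is correct and follows essentially the same Schur-complement route as the paper: both pivot on the invertible $\mathbf{A}$- and $\mathbf{B}$-blocks to expose the Schur complement $\mathbf{S}(z)=z\mathbf{d}_0+\mathbf{a}^{-1}(z)\mathbf{c}_0\mathbf{b}^{-1}(z)=\mathbf{a}^{-1}(z)\mathbf{h}(z)\mathbf{b}^{-1}(z)$, and then read off both the determinant and the $(3,1)$ block of the inverse. The only cosmetic difference is that the paper performs two successive Schur complements (first on the $\mathbf{A}$-block, then on the $\mathbf{B}$-block) without reordering, whereas your $(1,3,2)$ block permutation packages $\mathbf{A}$ and $\mathbf{B}$ into the single block-triangular corner $\mathbf{N}$ and reaches $\mathbf{S}(z)$ in one step---a mild streamlining of the same computation.
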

An explicit formula for $\left(z\mathbf{D_H} - \mathbf{H}\right)^{-1}$ will be given in the proof.
\begin{proof}
We use a compound Schur factoring, \textsl{i.e.}~use the Schur complement twice.
\begin{equation}
	z\mathbf{D_H} - \mathbf{H} = \mathbf{F}_1\mathbf{F}_2
\end{equation}
where
\begin{equation}
	\mathbf{F}_1 =
	\left[
		\begin{array}{ccc}
			z\mathbf{D_A} - \mathbf{A} & \mathbf{0} & \mathbf{0} \\
			\mathbf{X}_\mathbf{A} & \mathbf{I}_r & \mathbf{0} \\
			\mathbf{0} & \mathbf{0} & \mathbf{I}_{tr}
		\end{array}
	\right]
\end{equation}
and
\begin{equation}
	\mathbf{F}_2 =
	\left[
		\begin{array}{ccc}
			\mathbf{I}_{sr} & \mathbf{0} & \left(z\mathbf{D_A} - \mathbf{A}\right)^{-1}\mathbf{Y}_\mathbf{A}\mathbf{c}_0\mathbf{X}_\mathbf{B} \\
			\mathbf{0} & z\mathbf{d}_0 & -\mathbf{X}_\mathbf{A}\left(z\mathbf{D_A} - \mathbf{A}\right)^{-1}\mathbf{Y}_\mathbf{A}\mathbf{c}_0\mathbf{X}_\mathbf{B} \\
			\mathbf{0} & \mathbf{Y}_\mathbf{B} & z\mathbf{D_B} - \mathbf{B}
		\end{array}
	\right] \>.
\end{equation}
This is
\begin{equation}
	\mathbf{F}_2 = \mathbf{F}_3 \mathbf{F}_4
\end{equation}
with
\begin{equation}
	\mathbf{F}_3 =
	\left[
		\begin{array}{ccc}
			\mathbf{I}_{sr} & \mathbf{Q}_{j} & \left(z\mathbf{D_A} - \mathbf{A}\right)^{-1}\mathbf{Y}_\mathbf{A}\mathbf{c}_0\mathbf{X}_\mathbf{B} \\
			\mathbf{0} & \mathbf{Q}_{k} & -\mathbf{a}^{-1}\mathbf{c}_0\mathbf{X}_\mathbf{B}\left(z\mathbf{D_B} - \mathbf{B}\right)^{-1} \\
			\mathbf{0} & \mathbf{0} & \mathbf{I}_{tr}
		\end{array}
	\right] \>,
\end{equation}
where
\begin{equation}
	\mathbf{Q}_{j} = -\left(z\mathbf{D_A} - \mathbf{A}\right)^{-1}\mathbf{Y}_\mathbf{A}\mathbf{c}_0\mathbf{X}_\mathbf{B}\left(z\mathbf{D_B} - \mathbf{B}\right)^{-1}\mathbf{Y}_\mathbf{B}
\end{equation}
and
\begin{equation}
	\mathbf{Q}_{k} = z\mathbf{d}_0 + \mathbf{X}_\mathbf{A}\left(z\mathbf{D_A} - \mathbf{A}\right)^{-1}\mathbf{Y}_\mathbf{A}\mathbf{c}_0\mathbf{X}_\mathbf{B}\left(z\mathbf{D_B} - \mathbf{B}\right)^{-1}\mathbf{Y}_{\mathbf{B}} \>,
\end{equation}
and
\begin{equation}
	\mathbf{F}_4 =
	\left[
		\begin{array}{ccc}
			\mathbf{I}_{sr} & \mathbf{0} & \mathbf{0} \\
			\mathbf{0} & \mathbf{I}_r & \mathbf{0} \\
			\mathbf{0} & \mathbf{Y}_\mathbf{B} & z\mathbf{D_B} - \mathbf{B}
		\end{array}
	\right] \>.
\end{equation}
Simplifying $\mathbf{F}_3$ further,
\begin{equation}
	\mathbf{F}_3 =
	\left[
		\begin{array}{ccc}
			\mathbf{I}_{sr} & -\left(z\mathbf{D_A} - \mathbf{A}\right)^{-1}\mathbf{Y}_\mathbf{A}\mathbf{c}_0\mathbf{b}^{-1} & \mathbf{Q}_{\ell}\\
			\mathbf{0} & z\mathbf{d}_0 + \mathbf{a}^{-1}\mathbf{c}_0\mathbf{b}^{-1} & -\mathbf{a}^{-1}\mathbf{c}_0\mathbf{X}_\mathbf{B}\left(z\mathbf{D_B} - \mathbf{B}\right)^{-1} \\
			\mathbf{0} & \mathbf{0} & \mathbf{I}_{tr}
		\end{array}
	\right] \>,
\end{equation}
where,
\begin{equation}
	\mathbf{Q}_{\ell} = \left(z\mathbf{D_A} - \mathbf{A}\right)^{-1}\mathbf{Y}_\mathbf{A}\mathbf{c}_0\mathbf{X}_\mathbf{B}\left(z\mathbf{D_B} - \mathbf{B}\right)^{-1}  \>.
\end{equation}
Since
\begin{align}
	\operatorname{det}\left(z\mathbf{D_H} - \mathbf{H}\right) &= \operatorname{det}\mathbf{F}_1 \cdot \operatorname{det}\mathbf{F}_2 \\
	&= \operatorname{det} \mathbf{F}_1\cdot\operatorname{det}\mathbf{F}_3\cdot\operatorname{det}\mathbf{F}_4 \>,
\end{align}
we have
\begin{align}
	\operatorname{det}\left(z\mathbf{D_H} - \mathbf{H}\right) &= \operatorname{det}\left(z\mathbf{D_A} - \mathbf{A}\right)\operatorname{det}\left(z\mathbf{d}_0 + \mathbf{a}^{-1}\mathbf{c}_0\mathbf{b}^{-1}\right)\operatorname{det}\left(z\mathbf{D_B} - \mathbf{B}\right) \\
	&= \operatorname{det}\mathbf{a}(z)\operatorname{det}\left(z\mathbf{d}_0 + \mathbf{a}^{-1}\mathbf{c}_0\mathbf{b}^{-1}\right)\operatorname{det}\mathbf{b}(z) \\
	&=\operatorname{det}\left(z\mathbf{a}(z)\mathbf{d}_0\mathbf{b}(z) + \mathbf{c}_0\right) \\
	&=\operatorname{det}\left(\mathbf{h}(z)\right)
\end{align}
as claimed.
\end{proof}
To find the explicit form of the resolvent inverse, we invert the factors:
\begin{equation}
	\left(z\mathbf{D_H} - \mathbf{H}\right)^{-1} = \mathbf{F}_{4}^{-1}\mathbf{F}_{3}^{-1}\mathbf{F}_1^{-1} \>.
\end{equation}
\begin{align}
	\mathbf{F}_{4}^{-1} &=
	\left[
		\begin{array}{ccc}
			\mathbf{I}_{sr} & & \\
			& \mathbf{I}_r & \\
			& & \left(z\mathbf{D_B} - \mathbf{B}\right)^{-1}
		\end{array}
	\right]
	\left[
		\begin{array}{ccc}
			\mathbf{I}_{sr} & & \\
			& \mathbf{I}_r & \\
			& -\mathbf{Y}_\mathbf{B} & \mathbf{I}_{tr}
		\end{array}
	\right] \\
	&=
	\left[
		\begin{array}{ccc}
			\mathbf{I}_{sr} & & \\
			& \mathbf{I}_r & \\
			& -\left(z\mathbf{D_B} - \mathbf{B}\right)^{-1}\mathbf{Y}_\mathbf{B} & \left(z\mathbf{D_B} - \mathbf{B}\right)^{-1}
		\end{array}
	\right] \>.
\end{align}
Now (using $\boldsymbol{\alpha}$, $\boldsymbol{\beta}$, $\boldsymbol{\gamma}$ as shorthand for the relevant blocks, where $\boldsymbol{\beta}$ is regular),
\begin{align}
	\mathbf{F}_3 &=
	\left[
		\begin{array}{ccc}
			\mathbf{I} & {\boldsymbol{\alpha}} & \boldsymbol{\gamma} \\
			& \boldsymbol{\beta} & \boldsymbol{\delta} \\
			& & \mathbf{I}
		\end{array}
	\right]
	=
	\left[
		\begin{array}{ccc}
			\mathbf{I} & & \\
			& \boldsymbol{\beta} & \\
			& & \mathbf{I}
		\end{array}
	\right]
	\left[
		\begin{array}{ccc}
			\mathbf{I} & \boldsymbol{\alpha} & \boldsymbol{\gamma} \\
			& \mathbf{I} & \boldsymbol{\beta}^{-1}\boldsymbol{\delta} \\
			& & \mathbf{I}
		\end{array}
	\right] \\
	&=
	\left[
		\begin{array}{ccc}
			\mathbf{I} & & \\
			& \boldsymbol{\beta} & \\
			& & \mathbf{I}
		\end{array}
	\right]
	\left[
		\begin{array}{ccc}
			\mathbf{I} & & \boldsymbol{\gamma} \\
			& \mathbf{I} & \boldsymbol{\beta}^{-1}\boldsymbol{\delta} \\
			& & \mathbf{I}
		\end{array}
	\right]
	\left[
		\begin{array}{ccc}
			\mathbf{I} & \boldsymbol{\alpha} & \\
			& \mathbf{I} & \\
			& & \mathbf{I}
		\end{array}
	\right]
\end{align}
So
\begin{align}
	\mathbf{F}_3^{-1} &=
	\left[
		\begin{array}{ccc}
			\mathbf{I} & -\boldsymbol{\alpha} & \\
			& \mathbf{I} & \\
			& & \mathbf{I}
		\end{array}
	\right]
	\left[
		\begin{array}{ccc}
			\mathbf{I} & & -\boldsymbol{\gamma} \\
			& \mathbf{I} & -\boldsymbol{\beta}^{-1}\boldsymbol{\delta} \\
			& & \mathbf{I}
		\end{array}
	\right]
	\left[
		\begin{array}{ccc}
			\mathbf{I} & & \\
			& \boldsymbol{\beta}^{-1} & \\
			& & \mathbf{I}
		\end{array}
	\right] \\
	&=
	\left[
		\begin{array}{ccc}
			\mathbf{I} & -\boldsymbol{\alpha} & -\boldsymbol{\gamma} + \boldsymbol{\alpha}\boldsymbol{\beta}^{-1}\boldsymbol{\delta} \\
			& \mathbf{I} & -\boldsymbol{\beta}^{-1}\boldsymbol{\delta} \\
			& & \mathbf{I}
		\end{array}
	\right]
	\left[
		\begin{array}{ccc}
			\mathbf{I} & & \\
			& \boldsymbol{\beta}^{-1} & \\
			& & \mathbf{I}
		\end{array}
	\right] \\
	&=
	\left[
		\begin{array}{ccc}
			\mathbf{I} & -\boldsymbol{\alpha\beta}^{-1} & -\boldsymbol{\gamma} + \boldsymbol{\alpha\beta}^{-1}\boldsymbol{\delta} \\
			& \boldsymbol{\beta}^{-1} & -\boldsymbol{\beta}^{-1}\boldsymbol{\delta} \\
			& & \mathbf{I}
		\end{array}
	\right]
\end{align}
So
\begin{equation}
	\mathbf{F}_3^{-1} =
	\left[
		\begin{array}{ccc}
			\mathbf{I}_{sr} & \left(z\mathbf{D_A} - \mathbf{A}\right)^{-1}\mathbf{Y}_\mathbf{A}\mathbf{c}_0\mathbf{h}^{-1}\mathbf{a} & \mathbf{U} \\
			& \mathbf{bh}^{-1}\mathbf{a} & \mathbf{bh}^{-1}\mathbf{c}_0\mathbf{X}_\mathbf{B}\left(z\mathbf{D_B} - \mathbf{B}\right)^{-1} \\
			& & \mathbf{I}_{tr}
		\end{array}
	\right]
\end{equation}
using $z\mathbf{d}_0 + \mathbf{a}^{-1}\mathbf{c}_0\mathbf{b}^{-1} = \mathbf{a}^{-1}\left(z\mathbf{a}\mathbf{d}_0\mathbf{b} + \mathbf{c}_0\right)\mathbf{b}^{-1} = \mathbf{a}^{-1}\mathbf{h}\mathbf{b}^{-1}$ so
\begin{equation}
	\left(z\mathbf{d}_0 + \mathbf{a}^{-1}\mathbf{c}_0\mathbf{b}^{-1}\right)^{-1} = \mathbf{bh}^{-1}\mathbf{a}
\end{equation}
and
\begin{align}
	-\left(z\mathbf{D_A} - \right.&\left.\mathbf{A}\right)^{-1}\mathbf{Y}_\mathbf{A}\mathbf{c}_0\mathbf{X}_\mathbf{B}\left(z\mathbf{D_B} - \mathbf{B}\right)^{-1} \\ &+ \left(z\mathbf{D_A} - \mathbf{A}\right)^{-1}\mathbf{Y}_\mathbf{A}\mathbf{c}_0\mathbf{h}^{-1}\mathbf{C}_0\mathbf{X}_\mathbf{B}\left(z\mathbf{D_B} - \mathbf{B}\right)^{-1} = \mathbf{U} \\
	&= \left(z\mathbf{D_A} - \mathbf{A}\right)^{-1}\mathbf{Y}_\mathbf{A}\left[\mathbf{c}_0\mathbf{h}^{-1}\mathbf{c}_0 - \mathbf{c}_0\right]\mathbf{X}_\mathbf{B}\left(z\mathbf{D_B} - \mathbf{B}\right)^{-1} \>.
\end{align}
In the next section we will use this at $z = 0$ if $\mathbf{c}_0$ is invertible to show $\mathbf{U} = \mathbf{0}$ ($sr$ by $tr$ block). Also,
\begin{equation}
	\mathbf{F}_1^{-1} =
	\left[
		\begin{array}{ccc}
			\left(z\mathbf{D_A} - \mathbf{A}\right)^{-1} & & \\
			-\mathbf{X}_\mathbf{A}\left(z\mathbf{D_A} - \mathbf{A}\right)^{-1} & \mathbf{I}_r & \\
			& & \mathbf{I}_{tr}
		\end{array}
	\right] \>.
\end{equation}
Therefore $\left(z\mathbf{I} - \mathbf{H}\right)^{-1}$ is $\mathbf{F}_4^{-1}\mathbf{F}_3^{-1}\mathbf{F}_{1}^{-1}$. Now
\begin{align}
	&\mathbf{F}_4^{-1}\mathbf{F}_3^{-1} = \\
	&\left[
		\begin{array}{ccc}
			\mathbf{I}_{sr} & \left(z\mathbf{D_A} - \mathbf{A}\right)^{-1}\mathbf{Y}_\mathbf{A}\mathbf{c}_0\mathbf{h}^{-1}\mathbf{a} & \mathbf{U} \\
			\mathbf{0} & \mathbf{b}\mathbf{h}^{-1}\mathbf{a} & \mathbf{b}\mathbf{h}^{-1}\mathbf{c}_0\mathbf{X}_\mathbf{B}\left(z\mathbf{D_B} - \mathbf{B}\right)^{-1} \\
			\mathbf{0} & -\left(z\mathbf{D_B} - \mathbf{B}\right)^{-1}\mathbf{Y}_\mathbf{B}\mathbf{b}\mathbf{h}^{-1}\mathbf{a} & \mathbf{R}_{33}
		\end{array}
	\right]
\end{align}
where 
\begin{equation}
	\mathbf{R}_{33} = \left(z\mathbf{D_B} - \mathbf{B}\right)^{-1} - \left(z\mathbf{D_B} - \mathbf{B}\right)^{-1}\mathbf{Y}_\mathbf{B}\mathbf{b}\mathbf{h}^{-1}\mathbf{c}_0\mathbf{X}_\mathbf{B}\left(z\mathbf{D_B} - \mathbf{B}\right)^{-1} \>. 
\end{equation}
Therefore $\mathbf{F}_4^{-1}\mathbf{F}_3^{-1}\mathbf{F}_1^{-1}$ is
\begin{equation}
	\left[
		\begin{array}{ccc}
			\mathbf{R}_{11} & \left(z\mathbf{D_A} - \mathbf{A}\right)^{-1}\mathbf{Y}_\mathbf{A}\mathbf{c}_0\mathbf{b}^{-1}\mathbf{a} & \mathbf{U} \\
			\mathbf{R}_{21} & \mathbf{b}\mathbf{h}^{-1}\mathbf{a} & \mathbf{b}\mathbf{h}^{-1}\mathbf{c}_0\mathbf{X}_{\mathbf{B}}\left(z\mathbf{D_B} - \mathbf{B}\right)^{-1} \\
			 \mathbf{R}_{31}& -\left(z\mathbf{D_B} - \mathbf{B}\right)^{-1}\mathbf{Y}_\mathbf{B}\mathbf{b}\mathbf{h}^{-1}\mathbf{a} & \mathbf{R}_{33}
		\end{array}
	\right]
\end{equation}
where 
\begin{align}
	\mathbf{R}_{11} &= \left(z\mathbf{D_A} - \mathbf{A}\right)^{-1} - \left(z\mathbf{D_A} - \mathbf{A}\right)^{-1}\mathbf{Y}_\mathbf{A}\mathbf{c}_0\mathbf{h}^{-1}\mathbf{a}\mathbf{X}_\mathbf{A}\left(z\mathbf{D_A} - \mathbf{A}\right) \>, \\
	\mathbf{R}_{21} &= -\mathbf{b}\mathbf{h}^{-1}\mathbf{a}\mathbf{X}_\mathbf{A}\left(z\mathbf{D_A} - \mathbf{A}\right)^{-1} \>, \\
	 \mathbf{R}_{31} &= \left(z\mathbf{D_B} - \mathbf{B}\right)^{-1}\mathbf{Y}_\mathbf{B}\mathbf{b}\mathbf{h}^{-1}\mathbf{a}\mathbf{X}_\mathbf{A}\left(z\mathbf{D_A} - \mathbf{A}\right)^{-1} \>.
\end{align}
Moreover,
\begin{align}
	\left[
		\begin{array}{ccc}
			\mathbf{0} & \mathbf{0} & \mathbf{X}_{B}
		\end{array}
	\right]
	\left(z\mathbf{D_H} - \mathbf{H} \right)^{-1}
	\left[
		\begin{array}{c}
			\mathbf{Y}_\mathbf{A} \\
			\mathbf{0} \\
			\mathbf{0}
		\end{array}
	\right] &= \mathbf{b}^{-1}(z)\mathbf{b}(z)\mathbf{h}^{-1}(z)\mathbf{a}(z)\mathbf{a}^{-1}(z) \\
	&= \mathbf{h}^{-1}(z) \>,
\end{align}
as desired.

\section{Implications}
Consider first the Mandelbrot matrices $\mathbf{M}_n$ from Section \ref{sec:3}. Here $r$ is just $1$, and we may deduce a sequence of facts, as follows.
\begin{lemma}\label{lmm:1}
The dimension of $\mathbf{M}_n$ is $d_n \times d_n$, where $d_n = 2^{n-1} - 1$.
\end{lemma}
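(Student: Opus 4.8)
The plan is to reduce the claim to a one-term linear recurrence coming directly from the construction, and then solve it. Recall that $\mathbf{M}_{n+1}$ is obtained by gluing two copies of $\mathbf{M}_n$ together with a single connecting row and column (the ``glue''), exactly as exhibited for $\mathbf{M}_3$ and $\mathbf{M}_4$ in Section~\ref{sec:3}. Hence the size obeys
\begin{equation}
	d_{n+1} = 2 d_n + 1 \>,
\end{equation}
and the base case is $d_1 = 0$ (since $p_1 = 1$ is a nonzero constant, $\mathbf{M}_1$ is the empty $0\times 0$ matrix with $\det(z\mathbf{I}-\mathbf{M}_1)=1$), or equivalently $d_2 = 1$ since $\mathbf{M}_2 = [-1]$.

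Alternatively, and perhaps more intrinsically, one can avoid referring to the pictorial ``gluing'' by using the fact that $p_n(z) = \det(z\mathbf{I} - \mathbf{M}_n)$ is a \emph{monic} polynomial whose degree therefore equals the dimension $d_n$ of $\mathbf{M}_n$. The defining relation $p_{n+1} = z p_n^2 + 1$ immediately gives $\deg p_{n+1} = 1 + 2\deg p_n$, i.e.\ the same recurrence $d_{n+1} = 2d_n + 1$ with $d_1 = \deg p_1 = 0$. This route has the mild advantage of being self-contained once one observes leading terms do not cancel in $zp_n^2 + 1$.

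To finish, I would solve the recurrence: adding $1$ to both sides gives $d_{n+1} + 1 = 2(d_n + 1)$, so $d_n + 1 = 2^{n-1}(d_1 + 1) = 2^{n-1}$, whence $d_n = 2^{n-1} - 1$. A short induction on $n$ formalizes this: the base case $n=1$ (or $n=2$) is checked directly, and the inductive step substitutes $d_n = 2^{n-1}-1$ into $d_{n+1} = 2d_n + 1$ to obtain $d_{n+1} = 2^n - 1$.

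I do not expect a genuine obstacle here; the only thing requiring a moment's care is the choice and verification of the base case (and the convention for the empty matrix if one starts at $n=1$), together with the remark that the characteristic polynomial is monic so that ``degree'' and ``dimension'' coincide. Everything else is a routine solution of a first-order linear recurrence.
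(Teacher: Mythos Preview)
Your proposal is correct and follows essentially the same approach as the paper, which simply states ``Simple induction beginning with $d_2 = 1$ and $d_{n+1} = 2d_n + 1$.'' You have elaborated both the derivation of the recurrence and its solution more fully than the paper does, but the underlying argument is identical.
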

\begin{proof}
Simple induction beginning with $d_2 = 1$ and $d_{n+1} = 2d_n + 1$.
\end{proof}
\begin{lemma}\label{lmm:2}
$\mathbf{X}_n$ and $\mathbf{Y}_n$ are simply $\mathbf{e}_{d_n}^{T} = \left[\begin{array}{cccc}0 &\cdots & 0 & 1 \end{array}\right]$ and $\mathbf{e}_1$ where $\mathbf{e}_1^{T} = \left[\begin{array}{cccc}1 & 0 & \cdots & 0 \end{array}\right]$.
\end{lemma}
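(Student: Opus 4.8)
The plan is a straightforward induction on $n$, built entirely on Theorem~\ref{thm:5} together with the observation that the Mandelbrot recursion is an instance of it. Write $p_{n+1}(z) = z\,p_n(z)\cdot 1\cdot p_n(z) + 1$, i.e.\ apply Theorem~\ref{thm:5} with $r = 1$, $\mathbf{a}(z) = \mathbf{b}(z) = p_n(z)$, $\mathbf{d}_0 = \mathbf{c}_0 = 1$, and with $(\mathbf{M}_n, \mathbf{I}_{d_n})$ as the (monic, upper Hessenberg) linearization of $p_n(z)$. Substituting $\mathbf{A} = \mathbf{B} = \mathbf{M}_n$, $\mathbf{D_A} = \mathbf{D_B} = \mathbf{I}_{d_n}$, $\mathbf{d}_0 = 1$ into the block template for $\mathbf{H}$ in Theorem~\ref{thm:5} reproduces exactly the ``glue two copies of $\mathbf{M}_n$, with a single $-1$ just above and just below the new $1\times1$ middle row'' recipe of Section~\ref{sec:3} (one verifies this directly at the first step, $\mathbf{M}_2 = [-1] \mapsto \mathbf{M}_3$, and it is the general pattern). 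Hence we may simply take $\mathbf{M}_{n+1}$ to be the matrix $\mathbf{H}$ that Theorem~\ref{thm:5} produces, and the theorem then supplies, for free, the linearization identity $p_{n+1}(z) = \det(z\mathbf{I}_{d_{n+1}} - \mathbf{M}_{n+1})$ as well as the row/column vectors $\mathbf{X}_{n+1} = \mathbf{X}_\mathbf{H}$, $\mathbf{Y}_{n+1} = \mathbf{Y}_\mathbf{H}$ of the resolvent form.

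The induction itself is then pure bookkeeping. Base case $n = 2$: here $d_2 = 1$, $\mathbf{M}_2 = [-1]$, and $p_2(z)^{-1} = (z+1)^{-1} = 1\cdot\bigl(z\mathbf{I}_1 - \mathbf{M}_2\bigr)^{-1}\cdot 1$, so $\mathbf{X}_2 = \mathbf{Y}_2 = [1]$, which is at once $\mathbf{e}_{d_2}^T$ and $\mathbf{e}_1$. Inductive step: assume $\mathbf{X}_n = \mathbf{e}_{d_n}^T$ and $\mathbf{Y}_n = \mathbf{e}_1$ (vectors of length $d_n$). Theorem~\ref{thm:5} gives $\mathbf{X}_{n+1} = \left[\begin{array}{ccc}\mathbf{0} & \mathbf{0} & \mathbf{X}_n\end{array}\right]$ and $\mathbf{Y}_{n+1} = \left[\begin{array}{c}\mathbf{Y}_n \\ \mathbf{0} \\ \mathbf{0}\end{array}\right]$, where the three block-slots have sizes $d_n$, $1$, $d_n$. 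Plugging in the hypothesis, $\mathbf{X}_{n+1}$ is a row vector of length $d_n + 1 + d_n$ whose only nonzero entry is a $1$ in the last position, and $\mathbf{Y}_{n+1}$ a column vector of the same length with its only nonzero entry a $1$ in the first position. By Lemma~\ref{lmm:1}, $d_n + 1 + d_n = 2d_n + 1 = d_{n+1}$, so $\mathbf{X}_{n+1} = \mathbf{e}_{d_{n+1}}^T$ and $\mathbf{Y}_{n+1} = \mathbf{e}_1$, which closes the induction.

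There is no real analytic difficulty; the only steps that need attention are (i) checking that the construction of Section~\ref{sec:3} is \emph{literally} the $\mathbf{H}$ of Theorem~\ref{thm:5} for the stated data, so that the theorem's conclusions about $\mathbf{X}_\mathbf{H}$ and $\mathbf{Y}_\mathbf{H}$ genuinely apply, and (ii) tracking the block sizes so that the surviving ``$1$'' lands in slot $d_{n+1}$ (respectively slot $1$), which is precisely the role of Lemma~\ref{lmm:1}. In passing one notes that the hypotheses of Theorem~\ref{thm:5} are met: each $p_n$ is monic of degree $d_n$ (immediate from $p_{n+1} = z p_n^2 + 1$), and $\mathbf{M}_n$ is upper Hessenberg, which is the $r = 1$ case of the block upper Hessenberg requirement.
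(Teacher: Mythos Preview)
Your proof is correct and follows essentially the same approach as the paper: induction with base case $n=2$ and inductive step supplied by the $\mathbf{X}_\mathbf{H},\mathbf{Y}_\mathbf{H}$ formulas from Theorem~\ref{thm:5} (the paper's citation of Theorem~\ref{thm:4} here is evidently a mislabeling, since the recursive formulas it writes for $x_{n+1},y_{n+1}$ are exactly those of Theorem~\ref{thm:5}). Your version is more explicit in verifying that the Section~\ref{sec:3} construction coincides with the $\mathbf{H}$ of Theorem~\ref{thm:5} and in invoking Lemma~\ref{lmm:1} to track the block sizes, but the underlying argument is the same.
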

\begin{proof}
Again induction, beginning with $\mathbf{M}_2$:
\begin{equation}
	p_2 = z + 1 \quad \Rightarrow \quad \left[ 1 \right]\left(z + 1\right)^{-1}\left[1\right] = p_2^{-1}
\end{equation}
and 
\begin{equation}
	x_{n+1} = \left[\begin{array}{ccc}\text{zeros(size($x_n$))} & 0 & x_n\end{array}\right]
\end{equation}
while 
\begin{equation}
	y_{n+1} = \left[\begin{array}{c}y_n \\0 \\ \text{zeros(size($y_n$))}\end{array}\right]
\end{equation}	
by Theorem \ref{thm:4}.
\end{proof}
\begin{lemma}\label{lmm:3}
The bottom left corner of $\mathbf{M}_{n}^{-1}$ is always $-1$.
\end{lemma}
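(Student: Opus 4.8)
The plan is to read the bottom-left corner of $\mathbf{M}_n^{-1}$ off the resolvent form supplied by Theorem~\ref{thm:5}. The Mandelbrot recursion $p_{n+1}=zp_n^2+1$ is precisely the situation of that theorem with scalar data $r=1$, $\mathbf{a}=\mathbf{b}=p_n$, $\mathbf{A}=\mathbf{B}=\mathbf{M}_n$, $\mathbf{D_A}=\mathbf{D_B}=\mathbf{d}_0=1$, and $\mathbf{c}_0=1$; by Lemma~\ref{lmm:2} the associated standard triples are $\mathbf{X}_\mathbf{A}=\mathbf{X}_\mathbf{B}=\mathbf{e}_{d_n}^T$ and $\mathbf{Y}_\mathbf{A}=\mathbf{Y}_\mathbf{B}=\mathbf{e}_1$. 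Then $\mathbf{h}(z)=zp_n(z)^2+1=p_{n+1}(z)$, the matrix $\mathbf{H}$ built in Theorem~\ref{thm:5} is (a copy of) $\mathbf{M}_{n+1}$, $\mathbf{D_H}=\mathbf{I}_{d_{n+1}}$, and---because $\mathbf{X}_\mathbf{B}$ has its only nonzero entry in the last column and $\mathbf{Y}_\mathbf{A}$ has its only nonzero entry in the first row---the vectors $\mathbf{X}_\mathbf{H}=[\,\mathbf{0}\ \mathbf{0}\ \mathbf{X}_\mathbf{B}\,]$ and $\mathbf{Y}_\mathbf{H}=[\,\mathbf{Y}_\mathbf{A};\,\mathbf{0};\,\mathbf{0}\,]$ collapse to $\mathbf{e}_{d_{n+1}}^T$ and $\mathbf{e}_1$ respectively, using $d_{n+1}=2d_n+1$ from Lemma~\ref{lmm:1}.

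With this identification the resolvent identity of Theorem~\ref{thm:5} becomes
\begin{equation}
	\mathbf{e}_{d_{n+1}}^T\bigl(z\mathbf{I}_{d_{n+1}}-\mathbf{M}_{n+1}\bigr)^{-1}\mathbf{e}_1 = p_{n+1}(z)^{-1} \>,
\end{equation}
i.e.\ the $(d_{n+1},1)$ entry---the bottom-left corner---of $(z\mathbf{I}-\mathbf{M}_{n+1})^{-1}$ equals $1/p_{n+1}(z)$. The next step is to specialise to $z=0$. From the recursion, $p_{n+1}(0)=0\cdot p_n(0)^2+1=1\neq 0$, so $0\notin\Lambda(p_{n+1})$; hence $\det\mathbf{M}_{n+1}\neq 0$ and $\bigl(0\cdot\mathbf{I}-\mathbf{M}_{n+1}\bigr)^{-1}=-\mathbf{M}_{n+1}^{-1}$ is a legitimate value of the resolvent. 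Substituting $z=0$ in the displayed identity yields $-\,[\mathbf{M}_{n+1}^{-1}]_{d_{n+1},1}=p_{n+1}(0)^{-1}=1$, so the bottom-left corner of $\mathbf{M}_{n+1}^{-1}$ is $-1$. Together with the trivial base case $\mathbf{M}_2=[-1]$, whose sole entry is $-1$, this covers all $n$.

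I do not anticipate a real obstacle here: the work is entirely in matching the Mandelbrot data to the hypotheses of Theorem~\ref{thm:5} and in observing that $\mathbf{X}_\mathbf{H},\mathbf{Y}_\mathbf{H}$ select exactly the corner in question. The one point that genuinely must be checked rather than asserted is that $z=0$ is not a polynomial eigenvalue---without $p_{n+1}(0)\neq 0$ neither $\mathbf{M}_{n+1}^{-1}$ nor the evaluation of the resolvent at $0$ would be meaningful---but that is immediate from $p_{n+1}(0)=1$. As a cross-check one may instead substitute $z=0$ directly into the explicit block formula for $(z\mathbf{D_H}-\mathbf{H})^{-1}$ in the proof of Theorem~\ref{thm:5}: its bottom-left block $\mathbf{R}_{31}=(z\mathbf{D_B}-\mathbf{B})^{-1}\mathbf{Y}_\mathbf{B}\,\mathbf{b}\mathbf{h}^{-1}\mathbf{a}\,\mathbf{X}_\mathbf{A}(z\mathbf{D_A}-\mathbf{A})^{-1}$ has $(d_n,1)$ entry equal, via $\mathbf{e}_{d_n}^T(z\mathbf{I}-\mathbf{M}_n)^{-1}\mathbf{e}_1=p_n(z)^{-1}$ and $\mathbf{b}\mathbf{h}^{-1}\mathbf{a}=p_n^2/p_{n+1}$, to $p_n^{-1}\cdot(p_n^2/p_{n+1})\cdot p_n^{-1}=1/p_{n+1}(z)$---the same conclusion.
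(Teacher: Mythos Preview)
Your proof is correct and rests on the same machinery as the paper's---the resolvent formula of Theorem~\ref{thm:5} specialised at $z=0$---but you apply it more directly. The paper works with the explicit block formula for $(z\mathbf{D_H}-\mathbf{H})^{-1}$: it reads off the bottom-left block $\mathbf{R}_{31}\big|_{z=0}=\mathbf{M}_n^{-1}\mathbf{e}_1\,\mathbf{e}_{d_n}^{T}\mathbf{M}_n^{-1}=\mathcal{C}_n\mathcal{R}_n$ and then needs the \emph{inductive hypothesis} that the corner of $\mathbf{M}_n^{-1}$ is $-1$ to conclude $(-1)(-1)=+1$ in the corner of $-\mathbf{M}_{n+1}^{-1}$. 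You instead observe, via Lemma~\ref{lmm:2}, that $\mathbf{X}_\mathbf{H}=\mathbf{e}_{d_{n+1}}^{T}$ and $\mathbf{Y}_\mathbf{H}=\mathbf{e}_1$, so the scalar resolvent identity $\mathbf{X}_\mathbf{H}(z\mathbf{I}-\mathbf{M}_{n+1})^{-1}\mathbf{Y}_\mathbf{H}=p_{n+1}(z)^{-1}$ already isolates the corner entry, and $p_{n+1}(0)=1$ finishes the job with no induction on the corner value at all. Your cross-check through $\mathbf{R}_{31}$ is exactly the paper's route; the paper also retains the explicit block decomposition of $\mathbf{M}_{n+1}^{-1}$ because it is reused in Lemmas~\ref{lmm:4}--\ref{lmm:6}.
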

\begin{proof}
We have several proofs for this fact, most simply using the minor of the top right corner; but we will shortly want $\mathbf{M}_n^{-1}$ explicitly and so we compute it here. We note that $\mathbf{M}_2$ is invertible, $\mathbf{M}_2^{-1} = \left[ -1\right]$, and that $\mathbf{c}_0 = 1$ is always invertible. Thus by induction $\mathbf{M}_{n+1}$ is invertible because $\mathbf{M}_n$ is. We have, by specializing the resolvent inverse from Theorem \ref{thm:4}, that the bottom left block of $\left(\mathbf{0}\cdot\mathbf{I} - \mathbf{M}_{n+1}\right)^{-1}$ is
\begin{align}
	\mathbf{M}_n^{-1}\mathbf{Y}_{n}\cdot 1 \cdot 1^{-1} \cdot 1 \cdot \mathbf{X}_n \cdot \mathbf{M}_{n}^{-1} &= \mathbf{M}_n^{-1}\left[\begin{array}{c}1 \\ 0 \\ \vdots \end{array}\right]\left[\begin{array}{ccc}0 & \cdots & 1 \end{array} \right] \mathbf{M}_{n}^{-1} \\
	&= \mathcal{C}_n\mathcal{R}_n
\end{align}
where $\mathcal{C}_n$ is the first column of $\mathbf{M}_n^{-1}$ and $\mathcal{R}_n$ is the last row of $\mathbf{M}_n^{-1}$.

By the inductive hypothesis, the bottom left corner of this block is
\begin{equation}
	\left[
		\begin{array}{ccccc}
		& & & &\\
		\\
		\\
		+1
		\end{array}
	\right]
\end{equation} 
because $(-1)(-1) = +1$. Remember this is the bottom left block of $\left(-\mathbf{M}_{n+1}\right)^{-1}$; thus if the bottom left corners of $\mathbf{M}_n^{-1}$ are $-1$, so is the bottom left corner of $\left(\mathbf{M}_{n+1}\right)^{-1}$.
\end{proof}
\begin{lemma} \label{lmm:4}
The upper left block of $\mathbf{M}_{n+1}^{-1}$ is the same as the lower right block; both are
\begin{equation}
	\mathbf{M}_n^{-1} + \mathbf{M}_n^{-1}\mathbf{Y}_n\cdot\mathbf{X}_n\mathbf{M}_n^{-1} = \mathbf{M}_n^{-1} + \mathcal{C}_n\mathcal{R}_n \>.
\end{equation}
\end{lemma}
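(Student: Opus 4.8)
The plan is to reuse the explicit resolvent $\left(z\mathbf{D_H} - \mathbf{H}\right)^{-1} = \mathbf{F}_4^{-1}\mathbf{F}_3^{-1}\mathbf{F}_1^{-1}$ computed in the proof of Theorem \ref{thm:5}, specialized to the Mandelbrot data $\mathbf{a}(z) = \mathbf{b}(z) = p_n(z)$, $\mathbf{A} = \mathbf{B} = \mathbf{M}_n$, $\mathbf{D_A} = \mathbf{D_B} = \mathbf{I}$, $\mathbf{c}_0 = \mathbf{d}_0 = 1$ and $r = 1$, so that $\mathbf{D_H} = \mathbf{I}$, $\mathbf{H} = \mathbf{M}_{n+1}$, and at $z = 0$ the resolvent is $\left(-\mathbf{M}_{n+1}\right)^{-1} = -\mathbf{M}_{n+1}^{-1}$. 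The $(1,1)$ and $(3,3)$ blocks of that resolvent were named $\mathbf{R}_{11}$ and $\mathbf{R}_{33}$ in the proof, so the upper-left and lower-right blocks of $\mathbf{M}_{n+1}^{-1}$ are $-\mathbf{R}_{11}$ and $-\mathbf{R}_{33}$, each evaluated at $z = 0$.

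First I would record the scalar simplifications that make the two formulas collapse. From $p_{n+1} = zp_n^2 + 1$ one sees by immediate induction that $p_n(0) = 1$ for all $n \geq 1$, hence $\mathbf{a}(0) = \mathbf{b}(0) = 1$; and $\mathbf{h}(0) = 0\cdot\mathbf{a}(0)\mathbf{d}_0\mathbf{b}(0) + \mathbf{c}_0 = 1$, so $\mathbf{h}^{-1}(0) = 1$; also $\left(0\cdot\mathbf{I} - \mathbf{M}_n\right)^{-1} = -\mathbf{M}_n^{-1}$, which exists by Lemma \ref{lmm:3}. Substituting $z = 0$ into $\mathbf{R}_{11} = \left(z\mathbf{D_A} - \mathbf{A}\right)^{-1} - \left(z\mathbf{D_A} - \mathbf{A}\right)^{-1}\mathbf{Y}_\mathbf{A}\mathbf{c}_0\mathbf{h}^{-1}\mathbf{a}\mathbf{X}_\mathbf{A}\left(z\mathbf{D_A} - \mathbf{A}\right)^{-1}$ and into $\mathbf{R}_{33} = \left(z\mathbf{D_B} - \mathbf{B}\right)^{-1} - \left(z\mathbf{D_B} - \mathbf{B}\right)^{-1}\mathbf{Y}_\mathbf{B}\mathbf{b}\mathbf{h}^{-1}\mathbf{c}_0\mathbf{X}_\mathbf{B}\left(z\mathbf{D_B} - \mathbf{B}\right)^{-1}$, every scalar factor ($\mathbf{c}_0$, $\mathbf{h}^{-1}$, $\mathbf{a}$, $\mathbf{b}$) becomes $1$ and the pre- and post-factors $\left(0\cdot\mathbf{I}-\mathbf{M}_n\right)^{-1}$ carry a double sign that cancels, so both reduce to $-\mathbf{M}_n^{-1} - \mathbf{M}_n^{-1}\mathbf{Y}_n\mathbf{X}_n\mathbf{M}_n^{-1}$; negating gives the common value $\mathbf{M}_n^{-1} + \mathbf{M}_n^{-1}\mathbf{Y}_n\mathbf{X}_n\mathbf{M}_n^{-1}$ for both blocks. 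Finally, by Lemma \ref{lmm:2} we have $\mathbf{Y}_n = \mathbf{e}_1$ and $\mathbf{X}_n = \mathbf{e}_{d_n}^{T}$, so $\mathbf{M}_n^{-1}\mathbf{Y}_n = \mathcal{C}_n$, the first column of $\mathbf{M}_n^{-1}$, and $\mathbf{X}_n\mathbf{M}_n^{-1} = \mathcal{R}_n$, its last row, whence $\mathbf{M}_n^{-1}\mathbf{Y}_n\mathbf{X}_n\mathbf{M}_n^{-1} = \mathcal{C}_n\mathcal{R}_n$, which is exactly the claimed expression.

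There is no serious obstacle; the one point that needs a line of justification is that the factored resolvent of Theorem \ref{thm:5} is being evaluated at a single point $z = 0$, so one must check that $0$ is not a pole. This holds because $\det\left(0\cdot\mathbf{D_H} - \mathbf{H}\right) = \det\mathbf{h}(0) = \det\mathbf{c}_0 = 1 \neq 0$, and each Schur factor is invertible there since $0\cdot\mathbf{D_A} - \mathbf{A} = -\mathbf{M}_n$ is invertible (Lemma \ref{lmm:3}) and the middle Schur complement $z\mathbf{d}_0 + \mathbf{a}^{-1}\mathbf{c}_0\mathbf{b}^{-1}$ equals $1$ at $z = 0$. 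One could alternatively prove the lemma self-containedly by a single block-Schur elimination on $-\mathbf{M}_{n+1}$ directly, but reusing the computation already done for Theorem \ref{thm:5} is shorter and matches the style of the proof of Lemma \ref{lmm:3}.
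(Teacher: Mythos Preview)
Your proposal is correct and is precisely the ``simple computation'' the paper alludes to: as in the proof of Lemma~\ref{lmm:3}, you specialize the explicit resolvent formula derived after Theorem~\ref{thm:5} to the Mandelbrot data at $z=0$, read off the $(1,1)$ and $(3,3)$ blocks $\mathbf{R}_{11}$ and $\mathbf{R}_{33}$, and observe that with $\mathbf{a}(0)=\mathbf{b}(0)=\mathbf{h}(0)=\mathbf{c}_0=1$ both reduce to $-\mathbf{M}_n^{-1}-\mathbf{M}_n^{-1}\mathbf{Y}_n\mathbf{X}_n\mathbf{M}_n^{-1}$, whence the claim after negation and the identification $\mathbf{M}_n^{-1}\mathbf{Y}_n=\mathcal{C}_n$, $\mathbf{X}_n\mathbf{M}_n^{-1}=\mathcal{R}_n$ via Lemma~\ref{lmm:2}. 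Your check that $z=0$ is a regular point of every Schur factor is the one nontrivial ingredient, and you handle it correctly.
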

The proof is simple computation.
\begin{lemma}\label{lmm:5}
The first column and the last row of the blocks in Lemma \ref{lmm:4} are zero.
\end{lemma}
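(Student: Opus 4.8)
The plan is to reduce everything to the rank-one matrix $\mathcal{C}_n\mathcal{R}_n$ appearing in Lemma~\ref{lmm:4} and to exploit Lemma~\ref{lmm:3}. Write $d = d_n$ for the dimension of $\mathbf{M}_n$. By definition $\mathcal{C}_n$ is the first column of $\mathbf{M}_n^{-1}$ and $\mathcal{R}_n$ is its last row, so the $(i,j)$ entry of $\mathcal{C}_n\mathcal{R}_n$ is $(\mathcal{C}_n)_i(\mathcal{R}_n)_j$. The two entries I need to single out are $(\mathcal{R}_n)_1$, the $(d,1)$ entry of $\mathbf{M}_n^{-1}$, and $(\mathcal{C}_n)_d$, again the $(d,1)$ entry of $\mathbf{M}_n^{-1}$: both are the bottom-left corner of $\mathbf{M}_n^{-1}$, which by Lemma~\ref{lmm:3} equals $-1$.

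First I would look at the first column ($j=1$) of $\mathcal{C}_n\mathcal{R}_n$: it is $(\mathcal{R}_n)_1\,\mathcal{C}_n = -\mathcal{C}_n$, i.e.\ minus the first column of $\mathbf{M}_n^{-1}$. Hence the first column of $\mathbf{M}_n^{-1} + \mathcal{C}_n\mathcal{R}_n$ is $\mathcal{C}_n - \mathcal{C}_n = \mathbf{0}$. Symmetrically, the last row ($i=d$) of $\mathcal{C}_n\mathcal{R}_n$ is $(\mathcal{C}_n)_d\,\mathcal{R}_n = -\mathcal{R}_n$, minus the last row of $\mathbf{M}_n^{-1}$, so the last row of $\mathbf{M}_n^{-1} + \mathcal{C}_n\mathcal{R}_n$ vanishes as well. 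Since Lemma~\ref{lmm:4} identifies both the upper-left and lower-right blocks of $\mathbf{M}_{n+1}^{-1}$ with this matrix, the statement follows for both blocks at once.

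There is no real obstacle here; the whole content is the bookkeeping that the \emph{same} corner entry of $\mathbf{M}_n^{-1}$ plays the role of the scalar multiplier in both the column computation and the row computation, together with the value $-1$ supplied by Lemma~\ref{lmm:3}. The only thing to be careful about is indexing: confirming that $(\mathcal{R}_n)_1$ and $(\mathcal{C}_n)_{d_n}$ both denote the $(d_n,1)$ position, so that Lemma~\ref{lmm:3} applies verbatim. If desired, one could also record the stronger fact that the base case $\mathbf{M}_2^{-1}=[-1]$ makes the single entry there both the first column and last row, trivially zero after the correction, anchoring an inductive reading consistent with Lemmas~\ref{lmm:1}--\ref{lmm:4}.
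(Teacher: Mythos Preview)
Your proof is correct and follows essentially the same approach as the paper: use Lemma~\ref{lmm:3} to identify the first entry of $\mathcal{R}_n$ and the last entry of $\mathcal{C}_n$ as $-1$, so that the first column and last row of $\mathcal{C}_n\mathcal{R}_n$ are $-\mathcal{C}_n$ and $-\mathcal{R}_n$ respectively, cancelling the corresponding column and row of $\mathbf{M}_n^{-1}$. Your version is more explicit about the indexing than the paper's, but the argument is the same.
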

\begin{proof}
The left column of $\mathcal{C}_n\mathcal{R}_n$ is $-\mathcal{C}_n$ because the left element of $\mathcal{R}_n$ is $-1$ by Lemma \ref{lmm:3}. Thus the left column of $\mathbf{M}_n^{-1}\mathcal{C}_n\mathcal{R}_n$ is zero. Similarly the last row of $\mathcal{C}_n\mathcal{R}_n$ is $-\mathcal{R}_n$, leading to the same conclusion.
\end{proof}
\begin{lemma}\label{lmm:6}
For $n \geq 3$, the lower left block of $\mathbf{M}_n^{-1} + \mathcal{C}_n\mathcal{R}_n$ is a $1 + d_{n-1} \times 1 + d_{n-1}$ block of zeros, and all other blocks of $\mathbf{M}_{n+1}^{-1}$ are untouched.
\end{lemma}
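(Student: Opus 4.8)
The plan is to pin down $\mathbf{M}_n^{-1}$ explicitly in the $3\times 3$ block partition induced by the gluing --- block sizes $d_{n-1}$, $1$, $d_{n-1}$, since $d_n=2d_{n-1}+1$ --- then to add the rank-one correction $\mathcal{C}_n\mathcal{R}_n$ of Lemma~\ref{lmm:4} block by block and watch the lower-left corner collapse.

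First I would specialize the explicit resolvent from the proof of Theorem~\ref{thm:5} to the Mandelbrot situation: $\mathbf{a}(z)=\mathbf{b}(z)=p_{n-1}(z)$, $\mathbf{A}=\mathbf{B}=\mathbf{M}_{n-1}$, $\mathbf{D_A}=\mathbf{D_B}=\mathbf{d}_0=1$, $\mathbf{c}_0=1$, $\mathbf{h}(z)=p_n(z)$, with $\mathbf{X}_{n-1},\mathbf{Y}_{n-1}$ as in Lemma~\ref{lmm:2}. Evaluating $(z\mathbf{I}-\mathbf{M}_n)^{-1}$ at $z=0$ turns each $(z\mathbf{I}-\mathbf{M}_{n-1})^{-1}$ into $-\mathbf{M}_{n-1}^{-1}$, every scalar factor $p_{n-1}$ or $p_n^{-1}$ collapses to $1$ since $p_{n-1}(0)=p_n(0)=1$, and the upper-right block $\mathbf{U}$ vanishes (the remark following the proof of Theorem~\ref{thm:5}, since here $\mathbf{h}(0)=\mathbf{c}_0$ is invertible). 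Abbreviating $\mathbf{N}=\mathbf{M}_{n-1}$, $\mathcal{C}=\mathcal{C}_{n-1}$ (the first column of $\mathbf{N}^{-1}$, $=\mathbf{N}^{-1}\mathbf{Y}_{n-1}$) and $\mathcal{R}=\mathcal{R}_{n-1}$ (its last row, $=\mathbf{X}_{n-1}\mathbf{N}^{-1}$), this yields
\[
	\mathbf{M}_n^{-1}=
	\left[
	\begin{array}{ccc}
		\mathbf{N}^{-1}+\mathcal{C}\mathcal{R} & \mathcal{C} & \mathbf{0} \\
		-\mathcal{R} & -1 & \mathcal{R} \\
		-\mathcal{C}\mathcal{R} & -\mathcal{C} & \mathbf{N}^{-1}+\mathcal{C}\mathcal{R}
	\end{array}
	\right].
\]

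Next I would bring in Lemma~\ref{lmm:3}: the first entry of $\mathcal{R}$ (the bottom-left corner of $\mathbf{N}^{-1}$) and the last entry of $\mathcal{C}$ both equal $-1$, so the first column of $\mathcal{C}\mathcal{R}$ is $-\mathcal{C}$ and its last row is $-\mathcal{R}$; hence the first column and last row of $\mathbf{N}^{-1}+\mathcal{C}\mathcal{R}$ vanish (which re-proves Lemma~\ref{lmm:5}), and reading the first column and last row off the display above gives
\[
	\mathcal{C}_n=
	\left[\begin{array}{c}\mathbf{0}\\ 1\\ \mathcal{C}\end{array}\right],
	\qquad
	\mathcal{R}_n=
	\left[\begin{array}{ccc}\mathcal{R} & 1 & \mathbf{0}\end{array}\right],
	\qquad
	\mathcal{C}_n\mathcal{R}_n=
	\left[
	\begin{array}{ccc}
		\mathbf{0} & \mathbf{0} & \mathbf{0} \\
		\mathcal{R} & 1 & \mathbf{0} \\
		\mathcal{C}\mathcal{R} & \mathcal{C} & \mathbf{0}
	\end{array}
	\right].
\]
Thus $\mathcal{C}_n\mathcal{R}_n$ is supported exactly on the middle block-row and on the first two block-columns of the last block-row, and on adding it to $\mathbf{M}_n^{-1}$ those four sub-blocks cancel exactly ($-\mathcal{R}+\mathcal{R}$, $-1+1$, $-\mathcal{C}\mathcal{R}+\mathcal{C}\mathcal{R}$, $-\mathcal{C}+\mathcal{C}$). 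These four sub-blocks are precisely the intersection of rows $d_{n-1}+1,\dots,d_n$ with columns $1,\dots,d_{n-1}+1$, i.e.\ the claimed $(1+d_{n-1})\times(1+d_{n-1})$ lower-left block, which is therefore zero; every other block of $\mathbf{M}_n^{-1}+\mathcal{C}_n\mathcal{R}_n$ equals the corresponding block of $\mathbf{M}_n^{-1}$, so no other block of $\mathbf{M}_{n+1}^{-1}$ is changed. The base case $n=3$ --- where $\mathbf{N}=\mathbf{M}_2=[-1]$, $\mathcal{C}=\mathcal{R}=[-1]$ and $\mathbf{N}^{-1}+\mathcal{C}\mathcal{R}=[0]$ --- is included.

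The only point that needs care is bookkeeping: matching the $(d_{n-1},1,d_{n-1})$ block partition to the index ranges in the statement so that the ``lower-left block'' is correctly identified as the middle block-row together with the last block-row, intersected with the first block-column together with the single middle column, and checking that all four cancellations trace back to the single fact of Lemma~\ref{lmm:3}. Beyond that, the argument is exactly the block addition displayed above.
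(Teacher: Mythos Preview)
Your proof is correct and follows essentially the same route as the paper: specialize the resolvent formula of Theorem~\ref{thm:5} at $z=0$ to obtain the $3\times 3$ block form of $\mathbf{M}_n^{-1}$, use Lemma~\ref{lmm:3} to read off $\mathcal{C}_n$ and $\mathcal{R}_n$, and then verify the four block cancellations directly. The paper organizes the same computation with the index shifted by one (working with $\mathbf{M}_{n+1}^{-1}$ and $\mathcal{C}_{n+1}\mathcal{R}_{n+1}$) and displays the base case $n=3$ first, but the content is identical.
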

\begin{proof}
Consider first $\mathbf{M}_3 = \left[\begin{array}{ccc}-1 & \phantom{-}0 & -1 \\ -1 & \phantom{-}0 & \phantom{-}0 \\ \phantom{-}0 & -1 & -1 \end{array}\right]$ and $\mathbf{M}_3^{-1} = \left[\begin{array}{ccc}\phantom{-}0 & -1 & \phantom{-}0 \\ \phantom{-}1 & -1 & -1 \\ -1 & \phantom{-}1 & \phantom{-}0 \end{array}\right]$. Then $\mathcal{C}_3 = \left[\begin{array}{c}\phantom{-}0 \\ \phantom{-}1 \\ -1 \end{array}\right]$ and $\mathcal{R}_3 = \left[ \begin{array}{ccc}-1 & \phantom{-}1 & \phantom{-}0 \end{array}\right]$ so 
\begin{equation}
	\mathcal{C}_3\mathcal{R}_3 =
	\left[
		\begin{array}{ccc}
			\phantom{-}0 & \phantom{-}0 & 0 \\
			-1 & \phantom{-}1 & 0 \\
			\phantom{-}1 & -1 & 0
		\end{array}
	\right] \>,
	\quad
	\mathbf{M}_3^{-1} + \mathcal{C}_3\mathcal{R}_3 =
	\left[
		\begin{array}{ccc}
			0 & -1 & \phantom{-}0 \\
			0 & \phantom{-}0 & -1 \\
			0 & \phantom{-}0 & \phantom{-}0
		\end{array}
	\right] \>.
\end{equation}
Note that the first $d_n$ entries of $\mathcal{C}_n$ are $0$ and that the last $d_n$ entries of $\mathcal{R}_n$ are $0$. Note that as in the proof of Lemma \ref{lmm:3}, the bottom left $d_n \times d_n$ block of $\mathbf{M}_{n+1}^{-1}$ is just $\mathcal{C}_n\mathcal{R}_n$, and by specializing the resolvent formula the row just above that is $-\mathcal{R}_n$; similarly the column beside that block is $\mathcal{R}_n$; similarly the column beside that block is $\mathcal{C}_n$. Indeed
\begin{equation}
	\mathbf{M}_{n+1}^{-1} =
	\left[
		\begin{array}{ccc}
			\mathbf{M}_{n+1}^{-1} + \mathcal{C}_n\mathcal{R}_n & \mathcal{C}_n & \mathbf{0} \\
			-\mathcal{R}_n & -1 & \mathcal{R}_n \\
			-\mathcal{C}_n\mathcal{R}_n & -\mathcal{C}_n & \mathbf{M}_n^{-1} + \mathcal{C}_n\mathcal{R}_n
		\end{array}
	\right] \>.
\end{equation}
We have established in Lemma \ref{lmm:5} that the bottom right block has a zero last row and that the upper left block has a zero first column. Thus
\begin{equation}
	\mathcal{C}_{n+1} = 
	\left[
		\begin{array}{c}
			0 \\
			1 \\
			\mathcal{C}_n
		\end{array}
	\right]
	\quad \text{and} \quad
	\mathcal{R}_{n+1} =
	\left[
		\begin{array}{ccc}
			\mathcal{R}_n & 1 & 0
		\end{array}
	\right] \>.
\end{equation}
Therefore 
\begin{equation}
	\mathcal{C}_{n+1}\mathcal{R}_{n+1} = \left[\begin{array}{ccc} 0 & 0 & 0 \\ \mathcal{R}_n & 1 & 0 \\ 
	\mathcal{C}_n\mathcal{R}_n & \mathcal{C}_n & 0\end{array}\right]
\end{equation}
and 
\begin{equation}
	\mathbf{M}_{n+1}^{-1} + \mathcal{C}_{n+1}\mathcal{R}_{n+1} = \left[\begin{array}{ccc}\mathbf{M}_n^{-1}\mathcal{C}_n\mathcal{R}_n & \mathcal{C}_n & 0 \\ 0 & 0 & \mathcal{R}_n \\ 0 & 0 & \mathbf{M}_n^{-1} + \mathcal{C}_n\mathcal{R}_n\end{array}\right]
\end{equation}
establishing the claim by induction.
\end{proof}
\begin{definition}
A matrix family is \textbf{Bohemian} if its entries come from a single discrete (and hence bounded) set. Here the set is just $\{-1, 0, 1\}$. The name comes from ``\textbf{Bo}unded \textbf{He}ight \textbf{M}atrix of Integers."
\end{definition}
\begin{lemma}\label{lmm:7}
The Mandelbrot matrices are Bohemian, with height\footnote{$\mathrm{height}(\mathbf{A}) := ||\mathrm{vec}(\mathbf{A}) ||_{\infty}$ is the largest entry of $\left|\mathbf{A}\right|$, where $\left|\mathbf{A}\right|$ means the matrix whose entries are the absolute values of the entries of $\mathbf{A}$.} 1. Indeed the only entries are $0$ or $-1$.
\end{lemma}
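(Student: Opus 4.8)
The plan is to proceed by induction on $n$, using the block recursion for $\mathbf{M}_{n+1}$ supplied by Theorem~\ref{thm:5}. The base case is immediate: $\mathbf{M}_2 = [-1]$ has its only entry equal to $-1$. For the inductive step, observe that the Mandelbrot recursion $p_{n+1} = zp_n^2 + 1$ is exactly the case $\mathbf{a}(z) = \mathbf{b}(z) = p_n(z)$, $\mathbf{d}_0 = 1$, $\mathbf{c}_0 = 1$, $r = 1$ of the polynomial $\mathbf{h}(z) = z\mathbf{a}(z)\mathbf{d}_0\mathbf{b}(z) + \mathbf{c}_0$ linearized in Theorem~\ref{thm:5}; hence $\mathbf{M}_{n+1}$ may be taken to be the matrix $\mathbf{H}$ of that theorem with $\mathbf{A} = \mathbf{B} = \mathbf{M}_n$.

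Next I would substitute the explicit ancillaries. By Lemma~\ref{lmm:2} we have $\mathbf{X}_n = \mathbf{e}_{d_n}^{T}$ and $\mathbf{Y}_n = \mathbf{e}_1$, so with $\mathbf{c}_0 = 1$ the three off-diagonal ``glue'' blocks of $\mathbf{H}$ become $-\mathbf{Y}_n \mathbf{c}_0 \mathbf{X}_n = -\mathbf{e}_1 \mathbf{e}_{d_n}^{T}$ (a $d_n \times d_n$ block whose only nonzero entry is a $-1$ in the top-right corner), $-\mathbf{X}_n = -\mathbf{e}_{d_n}^{T}$ (a row with a single $-1$ at the end), and $-\mathbf{Y}_n = -\mathbf{e}_1$ (a column with a single $-1$ at the top). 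The remaining blocks are the scalar $0$, two genuine zero blocks, and two diagonal copies of $\mathbf{M}_n$. Writing out
\begin{equation}
	\mathbf{M}_{n+1} =
	\left[
		\begin{array}{ccc}
			\mathbf{M}_n & \mathbf{0} & -\mathbf{e}_1\mathbf{e}_{d_n}^{T} \\
			-\mathbf{e}_{d_n}^{T} & 0 & \mathbf{0} \\
			\mathbf{0} & -\mathbf{e}_1 & \mathbf{M}_n
		\end{array}
	\right] ,
\end{equation}
every entry is $0$, $-1$, or an entry of $\mathbf{M}_n$; by the inductive hypothesis each entry of $\mathbf{M}_n$ lies in $\{0,-1\}$, so the same holds for $\mathbf{M}_{n+1}$. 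This closes the induction, gives $\mathrm{height}(\mathbf{M}_n) = 1$ for all $n \geq 2$, and exhibits the Mandelbrot matrices as a Bohemian family over $\{0,-1\}$ (a fortiori over $\{-1,0,1\}$).

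There is essentially no analytical obstacle here; the substance of the lemma is bookkeeping. The one point that genuinely must be in place before the induction runs is the identification, from Lemma~\ref{lmm:2}, that the recovery vectors $\mathbf{X}_n$ and $\mathbf{Y}_n$ are standard unit (co)vectors rather than arbitrary $1 \times d_n$ and $d_n \times 1$ arrays: were they not, the products $\mathbf{Y}_n \mathbf{c}_0 \mathbf{X}_n$ could introduce entries outside $\{0,-1\}$ even with $\mathbf{c}_0 = 1$. One should likewise note that it is the specific scalar values $\mathbf{c}_0 = \mathbf{d}_0 = 1$ that pin the glue entries at $-1$; this is why the bound is special to the Mandelbrot family and not a feature of the general construction of Theorem~\ref{thm:5}.
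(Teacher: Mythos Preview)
Your proof is correct and follows the same approach as the paper, which simply writes ``Induction.'' You have merely (and quite helpfully) unpacked the one-word proof by exhibiting the block form of $\mathbf{M}_{n+1}$ from Theorem~\ref{thm:5} and invoking Lemma~\ref{lmm:2} to see that the glue entries are all $0$ or $-1$.
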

\begin{proof}
Induction.
\end{proof}
\begin{definition}
A matrix family has \textbf{rhapsody} if it is Bohemian and its inverse is also Bohemian with the same height. 
\end{definition}
\begin{theorem}
The Mandelbrot matrices have rhapsody.
\end{theorem}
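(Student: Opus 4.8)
The plan is to read the theorem off from the lemmas already established. Recall that a matrix family has rhapsody when it is Bohemian and its inverse is Bohemian of the same height, and that by Lemma~\ref{lmm:7} the family $\{\mathbf{M}_n\}$ is Bohemian with height $1$ (its only entries are $0$ and $-1$). Each $\mathbf{M}_n$ is invertible, since $p_n(z)=\det(z\mathbf{I}-\mathbf{M}_n)$ together with $p_{n+1}(0)=0\cdot p_n(0)^2+1=1$ gives $\det(-\mathbf{M}_n)=p_n(0)=1$, so $\det\mathbf{M}_n=\pm1\neq0$ (this is also observed inside the proof of Lemma~\ref{lmm:3}). Moreover $\mathrm{height}(\mathbf{M}_n^{-1})\geq1$ because the bottom left entry of $\mathbf{M}_n^{-1}$ equals $-1$ by Lemma~\ref{lmm:3}. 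Hence the whole theorem reduces to showing that every entry of $\mathbf{M}_n^{-1}$ lies in $\{-1,0,1\}$, which then forces $\mathrm{height}(\mathbf{M}_n^{-1})=1=\mathrm{height}(\mathbf{M}_n)$.

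That reduction would follow from a single induction on $n$, carried out simultaneously for four objects: $\mathbf{M}_n^{-1}$, the block $\mathbf{N}_n:=\mathbf{M}_n^{-1}+\mathcal{C}_n\mathcal{R}_n$ appearing in Lemma~\ref{lmm:4}, and the corner vectors $\mathcal{C}_n$ (first column of $\mathbf{M}_n^{-1}$) and $\mathcal{R}_n$ (last row of $\mathbf{M}_n^{-1}$). The inductive claim is that all entries of all four of $\mathbf{M}_n^{-1}$, $\mathbf{N}_n$, $\mathcal{C}_n$, $\mathcal{R}_n$ lie in $\{-1,0,1\}$. The base case is direct: $\mathbf{M}_2^{-1}=[-1]$, $\mathbf{N}_2=[0]$, $\mathcal{C}_2=\mathcal{R}_2=[-1]$ (and the $n=3$ instance is the explicit $3\times3$ computation in the proof of Lemma~\ref{lmm:6}). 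For the inductive step one would use the block recursions that the proof of Lemma~\ref{lmm:6} already records: the formula expressing $\mathbf{M}_{n+1}^{-1}$ through $\mathbf{N}_n$, $\mathcal{C}_n$, $\mathcal{R}_n$, and, adding $\mathcal{C}_{n+1}\mathcal{R}_{n+1}$ to it,
\begin{equation}
	\mathbf{N}_{n+1} \;=\;
	\left[\begin{array}{ccc}
		\mathbf{N}_n & \mathcal{C}_n & \mathbf{0} \\
		\mathbf{0} & \mathbf{0} & \mathcal{R}_n \\
		\mathbf{0} & \mathbf{0} & \mathbf{N}_n
	\end{array}\right],
\end{equation}
together with $\mathcal{C}_{n+1}$ being $\mathcal{C}_n$ with a $0$ and a $1$ prepended, and $\mathcal{R}_{n+1}$ being $\mathcal{R}_n$ with a $1$ and a $0$ appended. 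In each of these four recursions every block on the right-hand side is one of $\mathbf{N}_n$, $\pm\mathcal{C}_n$, $\pm\mathcal{R}_n$, $-\mathcal{C}_n\mathcal{R}_n$, $\pm1$, or a zero block, and crucially never a \emph{sum} of two such objects. By the induction hypothesis $\mathbf{N}_n$, $\mathcal{C}_n$, $\mathcal{R}_n$ have entries in $\{-1,0,1\}$, and each entry of the outer product $\mathcal{C}_n\mathcal{R}_n$ is a product of an entry of $\mathcal{C}_n$ with an entry of $\mathcal{R}_n$, hence again in $\{-1,0,1\}$. Therefore all entries of $\mathbf{M}_{n+1}^{-1}$, $\mathbf{N}_{n+1}$, $\mathcal{C}_{n+1}$, $\mathcal{R}_{n+1}$ lie in $\{-1,0,1\}$, and the induction closes.

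Combining this with Lemma~\ref{lmm:7} and the invertibility remark shows that $\mathbf{M}_n$ and $\mathbf{M}_n^{-1}$ are both Bohemian with entries in $\{-1,0,1\}$ and both of height $1$, which is exactly the assertion that the Mandelbrot matrices have rhapsody. The only delicate choice is what to induct on: because $\mathbf{M}_{n+1}^{-1}$ is built from $\mathbf{N}_n=\mathbf{M}_n^{-1}+\mathcal{C}_n\mathcal{R}_n$ rather than from $\mathbf{M}_n^{-1}$ directly, and the obvious bound on the sum $\mathbf{M}_n^{-1}+\mathcal{C}_n\mathcal{R}_n$ would only give entries in $\{-2,-1,0,1,2\}$, one has to include $\mathbf{N}_n$ (and the corner vectors) in the induction hypothesis rather than hoping to control them afterwards; once that is done, every remaining step is bookkeeping already contained in Lemmas~\ref{lmm:3}--\ref{lmm:6}.
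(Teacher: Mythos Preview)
Your proof is correct and follows essentially the same route as the paper: an induction on $n$ driven by the block recursions for $\mathbf{M}_{n+1}^{-1}$, $\mathcal{C}_{n+1}$, $\mathcal{R}_{n+1}$, and $\mathbf{N}_{n+1}=\mathbf{M}_{n+1}^{-1}+\mathcal{C}_{n+1}\mathcal{R}_{n+1}$ established in Lemmas~\ref{lmm:3}--\ref{lmm:6}. Your write-up is in fact more careful than the paper's terse version, since you make explicit that $\mathbf{N}_n$ must be carried in the induction hypothesis (the paper instead leans on Lemma~\ref{lmm:6}'s observation that adding $\mathcal{C}_n\mathcal{R}_n$ merely zeroes out a block of $\mathbf{M}_n^{-1}$ and ``did not touch the other entries'').
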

\begin{proof}
By induction using the previous lemmas. Clearly the entries of $\mathcal{C}_n$ and $\mathcal{R}_n$ are $+1$, $-1$, or $0$; thus $\mathcal{C}_n\mathcal{R}_n$ has height 1. Since the contribution of $\mathcal{C}_n\mathcal{R}_n$ to $\mathbf{M}_n^{-1}$ in $\mathbf{M}_n^{-1} + \mathcal{C}_n\mathcal{R}_n$ was entirely removing the lower left $1+d_{n-1}$ by $1 + d_{n-1}$ block, and did not touch the other entries, each block remains of height 1. 
\end{proof}

\section{First Matrix Polynomial Experiments}
\label{sec:experiments}
To test these ideas we examine a family of matrix polynomials that we have artificially created for the purpose. We use the following recursive construction. Put
\begin{equation}
	\mathbf{h}_{1} = z\mathbf{I} + \mathbf{c}_{0}
\end{equation}
and for $k \geq 0$
\begin{equation}
	\mathbf{h}_{k+1}(z) = z\mathbf{h}_{k}^{2}(z) + \mathbf{c}_{k}(z)
	\label{eq:exp1}
\end{equation}
where $\mathbf{c}_{k}(z) \in \mathbb{C}^{4\times 4}$ are nonsingular upper Hessenberg matrices with zero diagonal and entries $-1$ on the subdiagonal. We choose these matrices $\mathbf{c}_k$ in advance, not all the same. This gives a ``Mandelbrot-like'' flavour to the construction. Notice that for $k \geq 1$ $\mathrm{deg}\ \mathbf{h}_{k}(z) = 2^{k} - 1$, and its dimension is $4 \times 4$ for every $k$. The linearization of Theorem \ref{thm:4} gives matrices $\mathbf{H}_{k}$ of dimension $4\cdot\left(2^{k}- 1\right)$ by $4\cdot\left(2^{k}- 1\right)$. Our experiments covered various choices of the $\mathbf{c}_{k}$ and dimensions up to $16380\times 16380$. 

The matrices $\mathbf{c}_{k}$ that we used are
\begin{gather}
	\left[ \begin {array}{cccc} 0&-1&-1&-1\\ \noalign{\medskip}-1&0&0&1
\\ \noalign{\medskip}0&-1&0&1\\ \noalign{\medskip}0&0&-1&0\end {array}
 \right] , \left[ \begin {array}{cccc} 0&-1&-1&-1\\ \noalign{\medskip}
-1&0&1&1\\ \noalign{\medskip}0&-1&0&0\\ \noalign{\medskip}0&0&-1&0
\end {array} \right] , \left[ \begin {array}{cccc} 0&-1&-1&-1
\\ \noalign{\medskip}-1&0&0&0\\ \noalign{\medskip}0&-1&0&1
\\ \noalign{\medskip}0&0&-1&0\end {array} \right] , \\
\left[ 
\begin {array}{cccc} 0&-1&-1&-1\\ \noalign{\medskip}-1&0&1&0
\\ \noalign{\medskip}0&-1&0&0\\ \noalign{\medskip}0&0&-1&0\end {array}
 \right] , \left[ \begin {array}{cccc} 0&-1&-1&-1\\ \noalign{\medskip}
-1&0&0&-1\\ \noalign{\medskip}0&-1&0&1\\ \noalign{\medskip}0&0&-1&0
\end {array} \right] , \left[ \begin {array}{cccc} 0&-1&-1&-1
\\ \noalign{\medskip}-1&0&1&-1\\ \noalign{\medskip}0&-1&0&0
\\ \noalign{\medskip}0&0&-1&0\end {array} \right] , \\
 \left[\begin {array}{cccc} 0&-1&-1&0\\ \noalign{\medskip}-1&0&-1&1
\\ \noalign{\medskip}0&-1&0&-1\\ \noalign{\medskip}0&0&-1&0
\end {array} \right] , \left[ \begin {array}{cccc} 0&-1&-1&-1
\\ \noalign{\medskip}-1&0&-1&1\\ \noalign{\medskip}0&-1&0&0
\\ \noalign{\medskip}0&0&-1&0\end {array} \right] , \left[ \begin {array}{cccc} 0&-1&-1&0\\ \noalign{\medskip}-1&0&0&1 \\ \noalign{\medskip}0&-1&0&-1\\ \noalign{\medskip}0&0&-1&0
\end {array} \right] , \\ \left[ \begin {array}{cccc} 0&-1&-1&-1
\\ \noalign{\medskip}-1&0&-1&1\\ \noalign{\medskip}0&-1&0&1
\\ \noalign{\medskip}0&0&-1&0\end {array} \right] , \left[ 
\begin {array}{cccc} 0&-1&-1&-1\\ \noalign{\medskip}-1&0&0&1
\\ \noalign{\medskip}0&-1&0&0\\ \noalign{\medskip}0&0&-1&0\end {array}
 \right] , \left[ \begin {array}{cccc} 0&-1&-1&0\\ \noalign{\medskip}-
1&0&1&1\\ \noalign{\medskip}0&-1&0&-1\\ \noalign{\medskip}0&0&-1&0
\end {array} \right] \>.
\end{gather}
Larger experiments are of course possible and desirable.

\begin{figure}[h]
	\centering
	\includegraphics[width=0.7\textwidth]{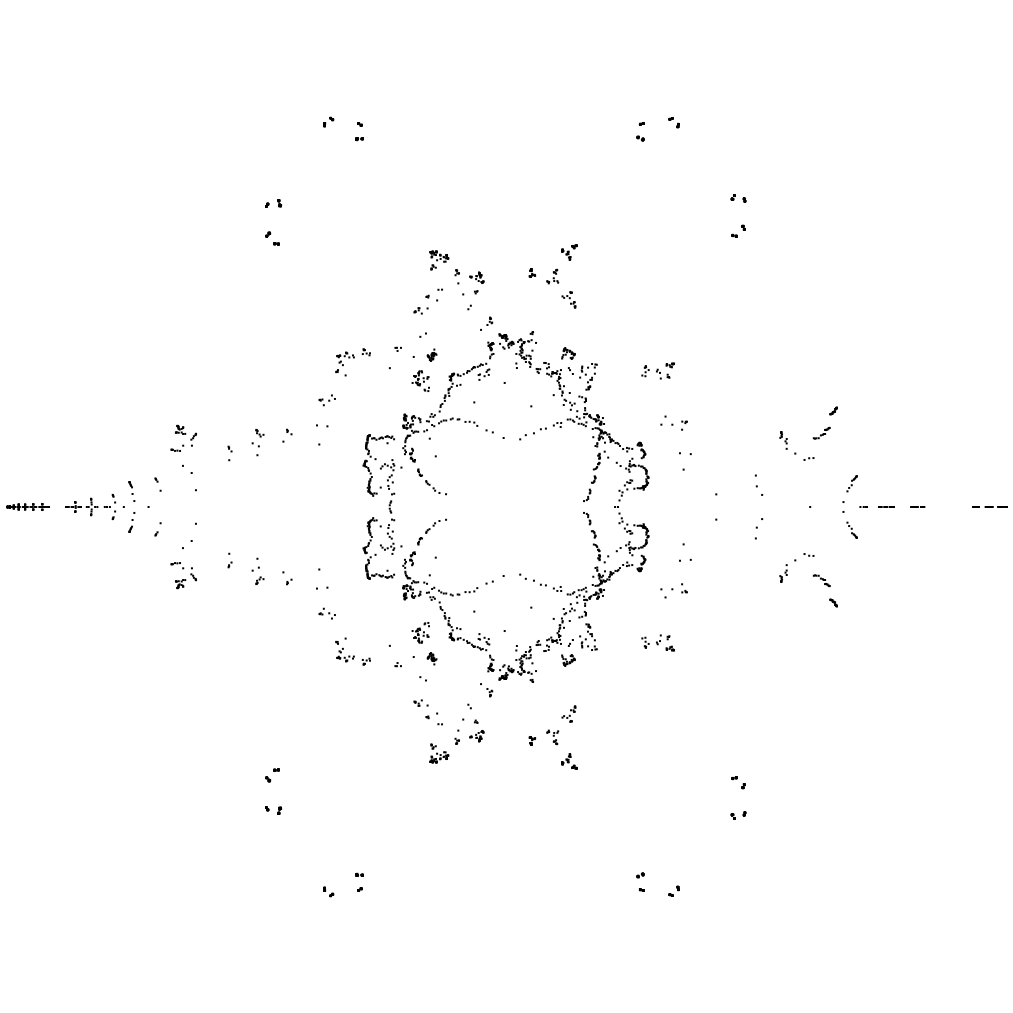}
	\caption{Eigenvalues of a $4092 \times 4092$ matrix. For details, refer to equation \eqref{eq:exp1}}
	\label{fig:example}
\end{figure}

We exhibit the eigenvalues of one $4092\times 4092$ ($k = 10$) matrix in figure \ref{fig:example}. We compared the computed eigenvalues (computed using Maple's \\ \texttt{LinearAlgebra:-Eigenvalues} routine, which calls an implementation of LAPACK via the NAG library) with the roots of the characteristic polynomials $p_{k}(z) = \mathrm{det}(\mathbf{h}_{k}(z))$ computed by Maple's built-in solver \texttt{fsolve} (refer to \cite{char2012first}) which is slow but quite reliable. Because the height of the exactly-computed characteristic polynomial reached $10^{234}$, solving the polynomial using \texttt{fsolve} required multiple precision, which is slow. To compute the residual, we computed the singular values of our matrix polynomial, $\mathbf{h}_{10}(\xi_{i})$ for each of the eigenvalues, $\xi_{i}$, and divided the smallest singular value by the largest singular value for each case. The residual $\sigma_{4}/\sigma_{1}$ of $\mathbf{h}_{4}(\lambda)$ in any polynomial eigenvalue $\lambda$ was never more than $\sim4 \times 10^{-13}$. Table \ref{tab:time} shows the time taken to compute the eigenvalues and time take to compute the roots using Maple's \texttt{fsolve} (using a machine with 32 GB of memory). Eigenvalue computation of the linearization was always the fastest taking only 94.782 seconds for the $k = 10$ case. Unfortunately, we had to kill the job on Maple after a week for the $k = 10$ case.

\begin{table}
	\centering
	\begin{tabular}{|c|c|c|c|}
		\hline
		$k$ & Dimension & \texttt{Eigenvalues} (s) & \texttt{fsolve} (s) \\ 
		\hline
		5 & 124 & 0.047 & 0.531 \\
		6 & 252 & 0.125 & 2.313 \\
		7 & 508 & 0.640 & 176.203 \\
		8 & 1020 & 2.375 &  829.093\\
		9 & 2044 & 13.469 & 80242.078 \\
		10 & 4092 & 94.782 & $-$ \\
		11 & 8188 & 715.109 & $-$ \\
		12 & 16380 & 6367.703 & $-$ \\
		\hline
	\end{tabular}
	\caption{Times and residuals of eigenvalue computation of the algebraic linearizations using Maple. The polynomial solver \texttt{fsolve} takes so long because the heights of the characteristic polynomials grow exponentially in the dimension. The eigenvalue solver has no difficulty, because the matrix height is constant.}
	\label{tab:time}
\end{table}

In another experiment, for a specialized example, we compared the accuracy of the eigenvalues from our companion construction and the eigenvalues from the Frobenius companion construction in \textsc{Matlab}. In comparison to the previous experiment, we used a lower degree matrix polynomial
\begin{equation}
	\mathbf{H}(z) = z\mathbf{a}(z)\mathbf{b}(z) + \mathbf{c}_{0} \>,
\end{equation}
where
\begin{align}
	\mathbf{a}(z) = \sum_{k = 0}^{3} z^{k}\mathbf{A}_{k} \>, \\
	\mathbf{b}(z) = \sum_{k=0}^{3} z^{k}\mathbf{B}_{k} \>,
\end{align}
and $\mathbf{c}_{0} = \mathbf{I}_{5}$. The matrices $\mathbf{A}_{k}$ that we used here were chosen by calling Maple's \texttt{RandomMatrix} function. For reference, the ones we used were
\begin{align}
	\mathbf{A}_{0} &=
	\begin{bmatrix}
		-81&-98&-76&-4&29\\ 
		-38&-77&-72&27&44\\ 
		-18&57&-2&8&92 \\ 
		87&27&-32&69&-31\\ 
		33&-93&-74& 99&67
	\end{bmatrix} \>, \\
	\mathbf{A}_{1} &=
	\begin{bmatrix}
		76&20&31&94&-16\\ 
		-44&-61&-50&12&-9\\ 
		24&-48&-80&-2&-50\\ 
		65&77&43&50&-22\\ 
		86&9&25&10&45
	\end{bmatrix} \>, \\
	\mathbf{A}_{2} &=
	\begin{bmatrix}
		70&82&12&22&60\\ 
		-32&72&-62&14&-95\\ 
		-1&42&-33&16&-20\\ 
		52&18&-68&9&-25\\ 
		-13&-59&-67&99&51
	\end{bmatrix} \>, \\
	\mathbf{A}_{3} &=
	\begin{bmatrix}
		-38&-63&12&21&-82\\ 
		91&-26&45&90&-70\\ 
		-1&30&-14&80&41\\ 
		63&10&60&19&91\\ 
		-23&22&-35&88&29
	\end{bmatrix} \>.
\end{align}
We then randomly assigned 
\begin{equation}
	\mathbf{B}_{0} =
	\begin{bmatrix}
		-15&10&-83&10&-4\\ 
		2&-44&9&-61&5\\ 
		-88&26&88&-26&-91\\ 
		99&-3&95&-20&-44\\ 
		-59&-62&63&-78&-38
	\end{bmatrix} \>,
\end{equation}
and chose the rest of the $\mathbf{B}_{k}$ to be 
\begin{align}
	\mathbf{B}_{3} &= \mathbf{A}_{3}^{-1} \\
	\mathbf{B}_{2} &= -\mathbf{A}_{3}^{-1}\mathbf{A}_{2}\mathbf{B}_{3} \\
	\mathbf{B}_{1} &= -\mathbf{A}_{3}^{-1}\left(\mathbf{A}_{1}\mathbf{B}_{3} + \mathbf{A}_{2}\mathbf{B}_{2}\right)
\end{align}
so that some of the coefficients of $\mathbf{H}(z)$, when expressed in the monomial basis, would be $\mathbf{0}$. However, since we are computing these coefficients numerically, rounding errors would be introduced, resulting in loss of accuracy as we will see in the residuals.

In order to construct the algebraic linearization of $\mathbf{H}(z)$, we need the linearizations of both $\mathbf{a}(z)$ and $\mathbf{b}(z)$. We decided to use the Frobenius companion construction for these smaller companions, since the coefficients were readily available to use. The rest then follows the construction described in this paper. This suggests the idea that we can potentially mix different polynomial bases using our construction, which will be elaborated on in the next example.


We computed the residuals (as described in our previous example) to compare the accuracy of the two results. We found that the largest residual for the eigenvalues of the algebraic linearization is approximately $7.8 \times 10^{-12}$ and the largest residual for the Frobenius companion matrix is approximately $7.0 \times 10^{-9}$, around 900 times larger. This suggests that the algebraic linearization may be more numerically stable.

For our third example, we show that one can mix different polynomial bases together. All that is needed is a standard triple for $\mathbf{a}(z)$ and another for $\mathbf{b}(z)$, like so:
\begin{align}
	\mathbf{X_A}(z\mathbf{A}_{1} - \mathbf{A}_{0})^{-1}\mathbf{Y_A} &= \mathbf{a}^{-1}(z) \\
	\mathbf{X_B}(z\mathbf{B}_{1} - \mathbf{B}_{0})^{-1}\mathbf{Y_B} &= \mathbf{b}^{-1}(z) \>.
\end{align}
For instance, suppose $\mathbf{a}(z)$ is expressed in the barycentric Lagrange basis, as follows:
\begin{equation}
	\mathbf{a}(z) = w(z) \sum_{k=0}^{n} \dfrac{\beta_{k}\mathbf{a}_{k}}{z - \tau_{k}} \quad \mathbf{a}_{k} \in \mathbb{C}^{r \times r}
\end{equation}
where the $\tau_{k}$ are distinct nodes, the node polynomial is $w(z) = \prod_{k = 0}^{n}(z - \tau_{k})$, and the barycentric weights $\beta_{k}$ come from the partial fraction decomposition
\begin{equation}
	\dfrac{1}{w(z)} = \sum_{k=0}^{n}\dfrac{\beta_{k}}{z - \tau_{k}} \>.
\end{equation}

Then there are several choices for linearizations of $\mathbf{a}(z)$ without needing to change bases. See \cite{amiraslani2008linearization} or \cite{van2015linearization}. In 2004, RMC implemented the following linearization in Maple\cite{corless2004generalized}: if
\begin{equation}
	\mathbf{A}_{0} = 
	\begin{bmatrix}
		-\tau_{0}\mathbf{I} & & & & \mathbf{a}_{0}^\mathrm{T} \\
		& -\tau_{1}\mathbf{I} & & & \mathbf{a}_{1}^\mathrm{T} \\
		& & \ddots & & \vdots \\
		& & & -\tau_{n}\mathbf{I} & \mathbf{a}_{n}^\mathrm{T} \\
		-\beta_{0}\mathbf{I} & -\beta_{1}\mathbf{I} & \cdots & -\beta_{n}\mathbf{I} & \mathbf{0}
	\end{bmatrix}
	\quad
	\mathbf{A}_{1} =
	\begin{bmatrix}
		-\mathbf{I} & & & & \\
		& -\mathbf{I} & & & \\
		& & \ddots & & \\
		& & & -\mathbf{I} & \\
		& & & & \mathbf{0} 
	\end{bmatrix}
\end{equation}
then $\mathrm{det}(\mathbf{A}_{0} - z\mathbf{A}_{1}) = \mathrm{det}\left(\mathbf{a}(z)^{\mathrm{T}}\right) = \mathrm{det}(\mathbf{a}(z))$. Putting the zero blocks in the lower left corner is not as numerically stable as using linearizations with the zero blocks in the upper left corner (see \cite{lawrence2014stability}) but we'll use the existing software. The transpose also complicates this example, but not much.

It can be shown that
\begin{equation}
	\mathbf{X_A} = 
	\begin{bmatrix}
		\mathbf{0} & \mathbf{0} & \cdots & \mathbf{0} & \mathbf{I}
	\end{bmatrix}
\end{equation}
and
\begin{equation}
	\mathbf{Y_A} =
	\begin{bmatrix}
		\mathbf{I} & \mathbf{I} & \cdots & \mathbf{I} & \mathbf{0}
	\end{bmatrix}^{\mathbf{T}}
\end{equation}
give $\mathbf{X_A}(z\mathbf{A}_{1} - \mathbf{A}_{0})^{-1} \mathbf{Y_A} = \mathbf{a}^{-1}(z)$ (note the sign reversal).

For $\mathbf{b}(z)$, we choose the Chebyshev basis. One could equally well choose the Legendre basis (implemented in Maple as \texttt{JacobiP(k, 0, 0, x)}) or any other bases. The generalized companion matrix (``colleague'' matrices of \cite{good1961colleague} and of \cite{specht1956lage} independently) give the linearization of $\mathbf{b}_{0}T_{0}(x) + \mathbf{b}_{1}T_{1}(x) + \mathbf{b}_{2}T_{2}(x) + \mathbf{b}_{3}T_{3}(x) + \mathbf{b}_{4}T_{4}(x) + \mathbf{b}_{5}T_{5}(x)$ as
\begin{equation}
	\mathbf{B}_{0} =
	\begin{bmatrix}
		\mathbf{0} & \frac{1}{2}\mathbf{I} & \mathbf{0} & \mathbf{0} & -\mathbf{b}_0 \\
		\mathbf{I} & \mathbf{0} & \frac{1}{2}\mathbf{I} & \mathbf{0} & -\mathbf{b}_1 \\
		& \frac{1}{2}\mathbf{I} & \mathbf{0} & \frac{1}{2}\mathbf{I} & -\mathbf{b}_{2} \\
		& & \frac{1}{2}\mathbf{I} & \mathbf{0} & -\mathbf{b}_{3} + \mathbf{b}_{5} \\
		& & & \frac{1}{2}\mathbf{I} & -\mathbf{b}_{4}
	\end{bmatrix}
	\quad
	\mathbf{B}_{1} =
	\begin{bmatrix}
		\mathbf{I} & & & & \\
		& \mathbf{I} & & & \\
		& & \mathbf{I} & & \\
		& & & \mathbf{I} & \\
		& & & & 2\mathbf{b}_5
	\end{bmatrix}
\end{equation}
with
\begin{equation}
	\mathbf{X_B} =
	\begin{bmatrix}
		\mathbf{0} & \mathbf{0} & \mathbf{0} & \mathbf{0} & \mathbf{I}
	\end{bmatrix}
\end{equation}
and
\begin{equation}
	\mathbf{Y_B} =
	\begin{bmatrix}
		\mathbf{I} & \mathbf{0} & \mathbf{0} & \mathbf{0} & \mathbf{0}
	\end{bmatrix}^{\mathrm{T}} \>.
\end{equation}
That is, $\mathbf{X_B}(z\mathbf{B}_{1} - \mathbf{B}_{0})^{-1}\mathbf{Y_B} = \mathbf{b}^{-1}(z)$, $z \notin \Lambda(b)$.

Specifically, we take for $\mathbf{a}(z)$ the  nodes $\begin{bmatrix}-1 & -\frac{1}{2} & \frac{1}{2} & 1\end{bmatrix}$ and the barycentric weights $\beta = \begin{bmatrix}-\frac{2}{3} & \frac{4}{3} & -\frac{4}{3} & \frac{2}{3}\end{bmatrix}$. We suppose that
\begin{align}
	\mathbf{a}(-1) &= 
	\begin{bmatrix}
		-2 & -1 & -1 \\
		-1 & -1 & 1 \\
		0 & -1 & -1
	\end{bmatrix} \\
	\mathbf{a}(-\sfrac{1}{2}) &= 
	\begin{bmatrix}
		-0.875 & -0.5 & -1.25 \\
		-0.75 & -0.125 & 0.5 \\
		0 & -0.75 & -0.875
	\end{bmatrix}\\
	\mathbf{a}(\sfrac{1}{2}) &= 
	\begin{bmatrix}
		-1.625 & 0.5 & -0.25 \\
		-1.75 & 0.125 & -0.5 \\
		0 & -1.75 & -0.625
	\end{bmatrix}\\
	\mathbf{a}(1) &=
	\begin{bmatrix}
		-2 & 1 & 1 \\
		-3 & 1 & -1 \\
		0 & -3 & 1
	\end{bmatrix}
\end{align}
Thus $\mathbf{a}(z)$ has degree at most $3$. We choose $\mathbf{b}(z)$ of degree $3$, with
\begin{align*}
	\mathbf{b}_{0} &= 
	\begin{bmatrix}
		\phantom{-}0 & -1 & \phantom{-}0 \\
		\phantom{-}1 & -1 & -1 \\
		-1 & \phantom{-}1 & \phantom{-}0
	\end{bmatrix}\\
	\mathbf{b}_{1} &= 
	\begin{bmatrix}
		\phantom{-}0 & \phantom{-}1 & \phantom{-}0 \\
		-1 & -1 & \phantom{-}1\\
		\phantom{-}0 & -1 & -1
	\end{bmatrix}\\
	\mathbf{b}_{2} &= 
	\begin{bmatrix}
		\phantom{-}1 & -1 & \phantom{-}0 \\
		-1 & -1 & -1 \\
		\phantom{-}0 & -1 & \phantom{-}0
	\end{bmatrix}\\
	\mathbf{b}_{3} &= 
	\begin{bmatrix}
		1 & 0 & 0 \\
		0 & 1 & 0 \\
		0 & 0 & 1
	\end{bmatrix} \>.
\end{align*}
The shape of the resulting algebraic linearization for $\mathbf{h}(z) = z\mathbf{a}(z)\mathbf{b}(z) + \mathbf{c}_0$ is shown in figure \ref{fig:shape}.
\begin{figure}
	\centering
	\begin{subfigure}[b]{0.475\textwidth}
		\fbox{\includegraphics[width=\textwidth]{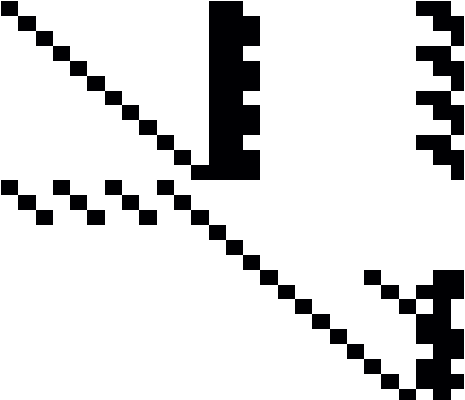}}
		\caption{Matrix structure of $\mathbf{H}$}
	\end{subfigure}
	\quad
	\begin{subfigure}[b]{0.475\textwidth}
		\fbox{\includegraphics[width=\textwidth]{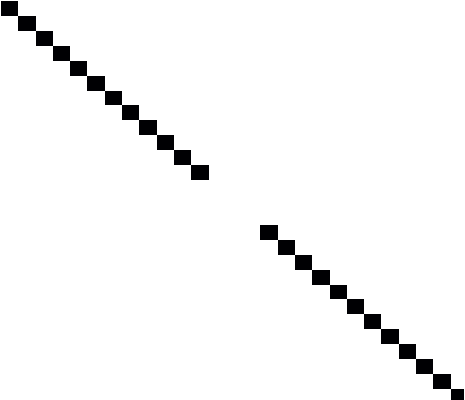}}
		\caption{Matrix structure of $\mathbf{D_H}$}
	\end{subfigure}
	\caption{Matrix structure of the companion matrix $(\mathbf{H}, \mathbf{D_H})$ of $\mathbf{h}(z) = z\mathbf{a}(z)\mathbf{b}(z) + \mathbf{c}_0$. The block of zeros in $\mathbf{D_H}$ means that there are spurious infinite eigenvalues. These are numerically harmless and can be discarded.}
	\label{fig:shape}
\end{figure}

To find the forward error of the eigenvalues, we needed a program to find the appropriate root/eigenvalue pairings. Because the number of eigenvalues and roots in this test was modest, we wrote this ``sibling finder'' program in Maple. The largest forward error of this construction is approximately $8.7 \times 10^{-15}$.

While not conclusive, these experiments show that the algebraic linearization introduced this paper can be fast and accurate when computing polynomial eigenvalues.

\begin{remark}
We learned to be careful not to have singular $\mathbf{c}_{k}$, which leads to high multiplicity zero eigenvalues of $\mathbf{h}_{k}(z)$ and thus of $\mathbf{H}_{k}$. Such high multiplicity zeros caused serious numerical artifacts. Owing to the integer nature of this family, this could perhaps be ameliorated without recourse to high precision, but we leave this for future work.
\end{remark}

\section{Concluding Remarks}

\begin{quote}
	``Almost anything will give you a strong linearization. What would be interesting would be numerical stability.'' --- Fran\c{c}oise Tisseur (private communication)
\end{quote}
There is some hope here for numerical stability of these linearizations, owing to the reduced height. Indeed, taken to extremes, a linearization of height 1 might have a characteristic equation of height exponential in the degree. This means that the polynomial evaluation condition number~\cite{corless2013graduate} will be $\mathcal{C^{N}}$ for some $C > 1$. However, the linearization resulting from recursive use of Theorem \ref{thm:4}, having height 1, will have an expected condition number $\mathcal{O}(N^{2})$~\cite{beltran2017polynomial}. Here $N$ is the dimension of the matrix. This means that the algorithm implied by the use of our linearizations can be (for some examples) exponentially more numerically stable.

However, not every matrix polynomial has a naturally recursive formulation. Preliminary experiments on reverse-engineering such formulations are promising and we will report on these developments later.

We have no theorems that suggest a lower-height matrix will have  better-conditioned eigenvalues, only an expectation that is perhaps naive. This, too, will be reported on at a later date. Of course by ``height'' we mean scaled height, which needs a careful formulation; obviously $s\mathbf{A}$ has eigenvalues $s\lambda_{k}$ if $\mathbf{A}$ has eigenvalues $\lambda_{k}$, and the same eigenvectors (and thus eigenvalue condition numbers are unchanged by the scaling). Perhaps a better numerical representation of scaled height's sensitivity would be, say,
\begin{equation}
	t = \min_{a_{ij}\neq 0}\frac{|a_{ij}|}{\mathrm{Height}(\mathbf{A})} \>.
\end{equation}
The smaller this number is, the more sensitive one might expect the eigenvalues to  be. Again, this has yet to be explored.

\section*{Acknowledgments}
We acknowledge the support of the Ontario Graduate Institution, The National Science \& Engineering Research Council of Canada, the University of Alcal\'{a}, the Rotman Institute of Philosophy, the Ontario Research Centre of Computer Algebra, and Western University. Part of this work was developed while R.~M.~Corless was visiting the University of Alcal\'{a}, in the frame of the project Giner de los Rios. L.~Gonzalez-Vega, J.~R.~Sendra and J.~Sendra are partially supported by the Spanish Ministerio de Econom\'{i}a y Competitividad under the Project MTM2014-54141-P.

\bibliography{references}
\end{document}